\documentclass[a4paper, 14pt]{amsart}

\usepackage[margin=1.15in]{geometry}
\usepackage{amscd,amssymb, amsmath, wasysym, mathrsfs, mathtools}
\usepackage{graphicx}
\usepackage[all, cmtip]{xy}

\usepackage{url}
\usepackage{hyperref}

\theoremstyle{plain}
\newtheorem{theorem}{Theorem}[section]
\newtheorem{prop}[theorem]{Proposition}

\newtheorem*{statement}{Statement}
\newtheorem{cor}[theorem]{Corollary}

\newtheorem{lemma}[theorem]{Lemma}

\newtheorem*{metatheorem}{Meta Theorem}
\theoremstyle{definition}

\newtheorem{defn}[theorem]{Definition}
\newtheorem{rmk}[theorem]{Remark}

\newtheorem{ex}[theorem]{Example}
\newtheorem*{ex*}{Example}

\newcommand\sA{{\mathcal A}}
\newcommand\sO{{\mathcal O}}
\newcommand\sK{{\mathcal K}}

\newcommand\sC{{\mathscr C}}

\newcommand\sE{{\mathcal E}}

\newcommand\sM{{\mathcal M}}
\newcommand\sW{{\mathcal W}}
\newcommand\sV{{\mathcal V}}
\newcommand\sL{\mathcal{L}}
\newcommand\sR{\mathcal{R}}

\newcommand\sS{\mathcal{S}}

\newcommand\zz{{\mathbb{Z}}}

\newcommand\cc{{\mathbb{C}}}

\newcommand\nn{{\mathbb{N}}}

\newcommand\hh{{\mathbb{H}}}
\newcommand\br{{\mathbf{R}}}
\newcommand\sEnd{\mathcal{E}nd^\bullet}
\newcommand\sEndp{\mathcal{E}nd^{0, \bullet}}
\newcommand{\odol}{\Omega^{p, \bullet}_{\textrm{Dol}}}
\newcommand{\mb}{\mathcal{M}_{\textrm{B}}}
\newcommand{\mdr}{\mathcal{M}_{\textrm{DR}}}
\newcommand{\mdol}{\mathcal{M_{\textrm{Dol}}}}


\DeclareMathOperator{\id}{id}                    
\DeclareMathOperator{\obj}{Obj}
\DeclareMathOperator{\ad}{ad}
\DeclareMathOperator{\morph}{Morph}
\DeclareMathOperator{\enom}{End}
\DeclareMathOperator{\iso}{Iso}
\DeclareMathOperator{\homo}{Hom}
\DeclareMathOperator{\enmo}{End}
\DeclareMathOperator{\spec}{Spec}

\DeclareMathOperator{\odr}{\Omega^\bullet_{\textrm{DR}}}

\DeclareMathOperator{\red}{red}

\DeclareMathOperator{\kn}{Ker}
\DeclareMathOperator{\im}{Im}

\title{Cohomology jump loci of compact K\"ahler manifolds}
\begin{document}
\author{Botong Wang}
\date{}
\begin{abstract}
We apply the method of Dimca-Papadima to study the cohomology jump loci in the representation variety and the moduli space of vector bundles with vanishing chern classes for a compact K\"ahler manifold. We introduce modules over differential graded Lie algebra to extend results of Dimca-Papadima to a general point in the representation variety or the moduli space. We show that locally the cohomology jump loci is isomorphic to the resonance variety via the exponential map. This paper generalizes a previous result of the author. 
\end{abstract}
\maketitle
\section{Introduction}
Given a connected CW-complex $X$ of finite type with a base point $x\in X$, the set of group homomorphisms $\homo(\pi_1(X, x), Gl(n, \mathbb{C}))$ has naturally a scheme structure. We denote this scheme by $\mathbf{R}(X, n)$. Every closed point $\rho\in\mathbf{R}(X, n)$ corresponds to a rank $n$ local system $L_\rho$ on $X$, with a basis $L_\rho|_{x}\cong \mathbb{C}^n$. Conversely, every rank $n$ local system with a basis at $x$ gives rise to a $Gl(n, \mathbb{C})$ representation of $\pi_1(X, x)$. In $\mathbf{R}(X, n)$, there are some canonically defined loci, $\sV^i_r(X, n)\stackrel{\textrm{def}}{=}\{\rho\in \mathbf{R}(X, n)\;|\; \dim H^i(X, L_\rho)\geq r\}$. They are called the cohomology jump loci, and they were studied extensively when $X$ is a smooth (quasi-)projective variety and $r=1$. For example, when $X$ is a projective variety, in a sequence of works by Green-Lazarsfeld \cite{gl1}, \cite{gl2}, Arapura, \cite{a2} and Simpson \cite{s2}, it is proved that $\sV^i_r(X, 1)\subset \mathbf{R}(X, 1)$ are torsion translates of subtori. More recently, in \cite{bw}, this result is generalized to all quasi-projective varieties. 

In \cite{dp}, Dimca and Papadima studied the analytic germ of $\mathbf{R}(X, n)$ at the trivial representation, and related it to the rational homotopy type of $X$. The purpose of this paper is to generalize this kind of results to points away from the trivial representation. To do that, we need to replace a commutative differential graded algebra (CDGA) by a differential graded Lie algebra (DGLA) as in \cite{gm}, and introduce modules over DGLA. More precisely, we will be considering a DGLA pair consisting of a DGLA and a module over that DGLA. The DGLA controls the deformation theory of the representation variety, and the module controls the deformation of the cohomology groups. Since we do not have a substitute for Morgan's Gysin complex for a nontrivial representation, we will mostly consider compact K\"ahler manifolds in this paper. 

Let $X$ be a connected CW complex. The following principle is demonstrated in \cite{dp}.

\begin{metatheorem}
The analytic germs of the representation variety $\mathbf{R}(X, n)$ and the cohomology jump loci $\mathcal{V}^i_r(X, n)$ at the trivial representation are controlled by the rational homotopy type of $X$. When $X$ is a smooth manifold, they are controlled by the homotopy type of the de Rham complex of $X$. 
\end{metatheorem}

More precisely, for a CDGA $(\mathcal{A}^\bullet, d)$, they defined a space of flat connections 
$$
\mathcal{F}(\sA^\bullet, n)=\{\omega\in \mathcal{A}^1\otimes \mathfrak{gl}(n, \mathbb{C})\; |\; d\omega+\frac{1}{2}[\omega, \omega]=0\}
$$
and the relative resonance variety 
$$
\mathcal{R}^i_r(\mathcal{A}^\bullet, n)=\{\omega \in \mathcal{F}(\mathcal{A}^\bullet, n)\;|\; \dim H^i(\mathcal{A}^\bullet\otimes \mathbb{C}^n, d\otimes \id_{\cc^n}+\omega)\geq r\}. 
$$
Suppose $X$ is a smooth manifold. Denote the de Rham complex of $X$ by $\odr(X)$. Assume $\sA^\bullet$ is of the same homotopy type as $\odr(X)$, and assume $\dim_\mathbb{C} \mathcal{A}^0=1$, $\dim_\mathbb{C} \mathcal{A}^{\leq q} <\infty$ for every $q\in \zz$. Then they showed that after taking the reduced formal schemes, there is a canonical isomorphism
$$
\mathbf{R}(X, n)_{(1), \red}\cong \mathcal{F}(\mathcal{A}^\bullet, n)_{(0), \red}
$$
which induces an isomorphism
$$
\mathcal{V}^i_r(X, n)_{(1), \red}\cong \mathcal{R}^i_r(\mathcal{A}^\bullet, n)_{(0), \red}
$$
\begin{rmk}
The results of \cite{gm} and \cite{dp} are about the analytic germs. And the notation $\mathbf{R}(X, n)_{(1), \red}$ was used there for analytic germs. However, according to \cite{ar}, the isomorphism between two analytic germs is equivalent to the isomorphism between the formal schemes. In this paper, we use the subscript ($\cdot$) for taking formal scheme at a point. Thus, essentially this is same as taking analytic germs. 
\end{rmk}

Let $\rho\in Hom(\pi_1(X), GL(n, \mathbb{C}))$ be any representation. We denote its induced local system by $L_\rho$, and define $\odr(L_\rho)=\odr(X)\otimes_{\mathbb{C}}L_\rho$, $\mathcal End^\bullet(L_\rho)=\Omega^\bullet_{DR}(X)\otimes_\mathbb{C}End(L_\rho)$, where $End(L_\rho)=L_\rho\otimes_\mathbb{C} L_\rho^*$ is the local system of endomorphisms of $L_\rho$. Fixing the base point $x$ in $X$, there is an augmentation map $\epsilon_x: \mathcal End^\bullet(L_\rho)\to End(L_\rho|_x)$, which is the restriction map on degree zero, and is zero in other degrees. Denote the kernel of $\epsilon_x$ by $\mathcal End^\bullet_0(L_\rho)$. Then our new principle is the following. 

\begin{metatheorem}
The analytic germs of the representation variety $\mathbf{R}(X, n)$ and the cohomology jump loci $\mathcal{V}^i_r(X, n)$ at $\rho$ are controlled by the homotopy type of the pair $(\sEnd_0(L_\rho), \Omega^\bullet_{DR}(L_\rho))$.
\end{metatheorem}

When $X$ is a connected compact K\"ahler manifold and when $\rho$ is a semi-simple representation, both $\mathcal End^\bullet(L_\rho)$ and $\Omega^\bullet_{DR}(L_\rho)$ are formal (see \cite{s1}). In this case, we can use the above principle to obtain some properties of $\mathcal{V}^i_r(X, n)$.

It is well known that the Zariski tangent space of $\mathbf{R}(X, n)$ at $\rho$ is isomorphic to the vector space $Z^1(\pi_1(X), \mathfrak{gl}(n, \mathbb{C})_{\textrm{Ad}\rho})$. In \cite{gm}, Goldman and Millson showed that locally, $\mathbf{R}(X, n)$ is isomorphic to a quadratic cone $C$ in $Z^1(\pi_1(X), \mathfrak{gl}(n, \mathbb{C})_{\textrm{Ad}\rho})$. Since $H^1(X, End(L_\rho))$ computes the group cohomology $H^1(\pi_1(X), \mathfrak{gl}(n, \mathbb{C})_{\textrm{Ad}\rho})$, we have naturally
$$
H^1(X, End(L_\rho))\cong Z^1(\pi_1(X), \mathfrak{gl}(n, \mathbb{C})_{\textrm{Ad}\rho})/B^1(\pi_1(X), \mathfrak{gl}(n, \mathbb{C})_{\textrm{Ad}\rho}).
$$
For a cocycle $\eta\in Z^1(\pi_1(X), \mathfrak{gl}(n, \mathbb{C})_{\textrm{Ad}\rho})$, denote its image in $H^1(X, End(L_\rho))$ under the above isomorphism by $\bar\eta$. In fact, the cone $C$ is defined to be the kernel of $Z^1(\pi_1(X), \mathfrak{gl}(n, \mathbb{C})_{\textrm{Ad}\rho})\to H^2(X, End(L_\rho))$ sending $\eta$ to $\bar\eta\wedge \bar\eta$. When $\eta\in C$, the following is the Aomoto complex of $\eta$, 
$$
(H^\bullet(X, L_\rho), \wedge\bar\eta)\stackrel{\textrm{def}}{=}H^0(X, L_\rho)\stackrel{\wedge \bar\eta}{\longrightarrow}H^1(X, L_\rho)\stackrel{\wedge \bar\eta}{\longrightarrow}H^2(X, L_\rho)\stackrel{\wedge \bar\eta}{\longrightarrow}\cdots.
$$
We set-theoretically define the resonance variety $\mathcal{R}^i_r(X, \rho)=\{\eta\in C\;|\; \dim H^i(H^\bullet(X, L_\rho), \bar\eta)\geq r\}$, which is a cone in $C$. A precise definition of $\mathcal{R}^i_r(X, \rho)$ as closed subscheme of $C$ will be given in Definition \ref{resonance}. 

\begin{theorem}\label{thm1}
As showed in \cite{gm}, when $\rho$ is semisimpe, there is an isomorphism\footnotemark\  between the formal schemes
$$\mathbf{R}(X, n)_{(\rho)}\cong C_{(0)}.$$
This isomorphism induces an isomorphism between the reduced formal schemes
$$\mathcal{V}^i_r(X, n)_{(\rho), \red}\cong \mathcal{R}^i_r(X, \rho)_{(0), \red}.$$
\end{theorem}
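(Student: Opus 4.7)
The plan is to deduce the theorem from the second Meta Theorem together with Simpson's formality results for compact K\"ahler manifolds. I will first set up a DGLA-pair version of the statements in the first Meta Theorem: given a DGLA $\sL^\bullet$ together with a DGLA-module $\sM^\bullet$ over $\sL^\bullet$, define a formal scheme $\sF(\sL^\bullet)$ of flat connections $\omega\in \sL^1$ with $d\omega+\frac{1}{2}[\omega,\omega]=0$, and, inside it, a formal relative resonance subscheme $\sR^i_r(\sL^\bullet,\sM^\bullet)$ cut out by the condition that $\dim H^i(\sM^\bullet, d_{\sM}+\omega\cdot)\geq r$. By the Meta Theorem applied to the pair $(\sEnd_0(L_\rho),\Omega^\bullet_{DR}(L_\rho))$ (with the base point providing the augmentation), the formal schemes $\mathbf{R}(X,n)_{(\rho)}$ and $\sV^i_r(X,n)_{(\rho)}$ are controlled by $\sF$ and $\sR^i_r$ of this pair.

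Next I would invoke Simpson's theorem \cite{s1}: when $X$ is compact K\"ahler and $\rho$ is semisimple, the DGLA $\sEnd^\bullet(L_\rho)$ is formal, and the module $\Omega^\bullet_{DR}(L_\rho)$ is formal over it. Thus the pair $(\sEnd_0(L_\rho),\Omega^\bullet_{DR}(L_\rho))$ is quasi-isomorphic, through a zig-zag of DGLA-pair maps, to its cohomology pair $\bigl(H^\bullet(X,\mathit{End}_0(L_\rho))[0], H^\bullet(X,L_\rho)[0]\bigr)$ equipped with zero differentials and with the module structure induced by cup product. The technical heart of the paper (the module-theoretic analog of Goldman--Millson) should then yield that $\sF$ and $\sR^i_r$ transport well under such quasi-isomorphisms, at least after passing to reduced formal schemes.

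With the zero differential, $\sF$ of the cohomology pair is precisely the scheme of $\eta\in H^1(X,\mathit{End}_0(L_\rho))$ with $[\eta,\eta]=0$, i.e.\ $\bar\eta\wedge\bar\eta=0$ in $H^2$, which is exactly the Goldman--Millson cone $C$ after identifying $H^1$ with the quotient of cocycles by coboundaries. Likewise, the relative resonance condition $\dim H^i(H^\bullet(X,L_\rho),\wedge\bar\eta)\geq r$ is literally the Aomoto complex defining $\sR^i_r(X,\rho)$. Composing the quasi-isomorphism invariance with these identifications gives the isomorphism of formal schemes and induces the desired isomorphism between reduced cohomology jump loci, compatibly with the Goldman--Millson isomorphism $\mathbf{R}(X,n)_{(\rho)}\cong C_{(0)}$.

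The main obstacle, and the step that must be verified with care, is the invariance statement for the relative resonance variety under DGLA-pair quasi-isomorphisms. The flat-connection part is essentially Goldman--Millson's deformation-theoretic argument, but when we additionally track the dimension of $H^i(\sM^\bullet,d+\omega)$ we need to produce, over the base $\sF(\sL^\bullet)_{(0)}$, a perfect complex whose Fitting ideals cut out the jump loci and show that a pair quasi-isomorphism induces a quasi-isomorphism of such relative complexes. This is where reduced formal schemes enter: the author only claims an isomorphism after reduction, which suggests that the Fitting-ideal structure need only agree up to nilpotents, a standard but nontrivial verification requiring an explicit model for the universal twisted complex over $\sF$ and a homotopy-transfer argument carried through for the module $\sM^\bullet$.
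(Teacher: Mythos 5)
Your overall strategy matches the paper's: apply the Goldman--Millson theory to get $\mathbf{R}(X,n)_{(\rho)}\cong C_{(0)}$, prove a formality statement for the relevant DGLA pair using Simpson's harmonic-metric results, and transport the resonance variety through a DGLA-pair homotopy equivalence. You have also correctly isolated the technical heart: the invariance of the twisted cohomology functor under pair quasi-isomorphisms. But your proposal leaves exactly that step as a gap, and the way you imagine filling it (building a universal twisted complex over the formal scheme of flat connections and running a homotopy-transfer argument to compare Fitting ideals) is not what the paper does, and is in fact harder to carry out than what the paper actually does.

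The paper's mechanism is worth contrasting with yours. Instead of constructing any universal object over the formal flat-connection scheme, it proves a pointwise statement: for a complete noetherian local \emph{integral domain} $S$ mapping compatibly to $\br(X,n)_{(\rho)}$ and $C_{(0)}$, the image lands in $\sV^i_r$ if and only if it lands in $\sR^i_r$. This is exactly what testing ideals up to radical amounts to, and it is why only the reduced statement follows. That pointwise statement is then reduced, by writing $S=\varprojlim S_k$ with $S_k$ Artinian and using Mittag--Leffler, to the Artinian case, where the key invariance result (Proposition \ref{homequ}) is proved cleanly by induction on $\dim_\cc A$ using the short exact sequence $0\to\cc\to A\to A'\to 0$ and the 5-lemma. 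You never produce an argument for this invariance; you only assert that it ``should'' follow. Without it the proof is incomplete.

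There is also a subtle error in your formality step. You claim the pair $(\sEnd_0(L_\rho),\odr(L_\rho))$ is quasi-isomorphic to its cohomology pair. The paper only proves formality for the \emph{unaugmented} pair $(\sEnd(L_\rho),\odr(L_\rho))$ (Proposition \ref{formal}); the augmented DGLA $\sEnd_0(L_\rho)$ is what controls $\br(X,n)_{(\rho)}$ via Goldman--Millson, but its degree-zero part is different (sections vanishing at $x$), and identifying the flat-connection space with $C$ rather than with $\bar C=\ker(H^1\to H^2)$ involves the extra factor $V=\enmo(L_\rho)|_x/H^0$. The paper handles this by observing that the module-cohomology functor $\hh^i_{\odr(L_\rho)}$ is the same whether one views $\odr(L_\rho)$ as a module over $\sEnd$ or over $\sEnd_0$, and by the relation $\sR^i_r(X,\rho)=\pi^{-1}(\bar\sR^i_r(X,\rho))$ under the projection $C\to\bar C$. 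Your write-up elides this bookkeeping, which is precisely where a naive identification of the flat-connection space of the cohomology DGLA with $C$ (rather than $\bar C$) would go wrong.
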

\footnotetext{Even though the isomorphism may not be canonically defined, we will see that the ambiguity is an isomorphism of $C_{(0)}$ which preserves each $\sR^i_r(X, \rho)$. }

The non-reduced part of $\mathcal{V}^i_r(X, n)$ near a point $\rho\in \mathbf{R}(X, n)$ with $\dim H^i(X, L_\rho) > r$ is mysterious. However, as discussed in \cite{w}, the deformation theory of $\mathcal{V}^i_r(X, n)$ at $\rho$ with $\dim H^i(X, L_\rho)=r$ is well understood. In particular, we can generalize the previous theorem to the original formal scheme before taking reduced induced structure.

\begin{theorem}\label{thm2}
As in the previous theorem, moreover we assume $\dim H^i(X, L_\rho)=r$. Then the isomorphism between $\mathbf{R}(X, n)_{(\rho)}$ and $C_{(0)}$ induces an isomorphism
$$\mathcal{V}^i_r(X, n)_{(\rho)}\cong \mathcal{R}^i_r(X, \rho)_{(0)}.$$
\end{theorem}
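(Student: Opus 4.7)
The plan is to upgrade the reduced isomorphism of Theorem \ref{thm1} to a non-reduced one by exploiting that, under the hypothesis $\dim H^i(X, L_\rho) = r$, the scheme structure of $\mathcal{V}^i_r(X,n)_{(\rho)}$ is controlled by a Fitting ideal of a coherent sheaf, which behaves functorially under quasi-isomorphism of perfect complexes.

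First, I would set up universal perfect complexes on both sides. On $\mathbf{R}(X, n)_{(\rho)}$, the derived pushforward of the universal local system is representable by a bounded complex $\mathcal{F}^\bullet$ of finitely generated projective modules over the formal local ring, whose fibre cohomology at the closed point recovers $H^\bullet(X, L_\rho)$. On $C_{(0)}$, the universal $1$-cocycle yields an analogous universal Aomoto complex on $H^\bullet(X, L_\rho) \otimes \mathcal{O}_{C,0}$. Under the hypothesis $\dim H^i(X, L_\rho) = r$, the $i$-th cohomology sheaf $\mathcal{H}^i$ of either complex has fibre dimension exactly $r$ at the closed point, so by the scheme-theoretic definition of Definition \ref{resonance} and the results of \cite{w}, both $\mathcal{V}^i_r(X,n)_{(\rho)}$ and $\mathcal{R}^i_r(X,\rho)_{(0)}$ coincide scheme-theoretically with the vanishing locus of $\fitt_{r-1}(\mathcal{H}^i)$ of the respective universal complexes.

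Next, I would use the DGLA-module framework developed earlier in the paper to show that the isomorphism $\Phi: \mathbf{R}(X,n)_{(\rho)} \cong C_{(0)}$ identifies the two universal complexes up to quasi-isomorphism. For semisimple $\rho$ on a compact K\"ahler $X$, the pair $(\mathcal{E}nd^\bullet_0(L_\rho), \Omega^\bullet_{DR}(L_\rho))$ is formal by \cite{s1}, hence equivalent as a DGLA-module to its cohomology $\bigl(H^\bullet(X, \mathrm{End}_0(L_\rho)), H^\bullet(X, L_\rho)\bigr)$ with zero differential. By the second Meta Theorem of this paper, this equivalence realises $\Phi$ and transports $\mathcal{F}^\bullet$ to the universal Aomoto complex on $C_{(0)}$. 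Since Fitting ideals are invariants of the isomorphism class of coherent sheaves, $\Phi$ identifies the ideals $\fitt_{r-1}(\mathcal{H}^i)$ on both sides, yielding the required isomorphism of closed subschemes.

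The main obstacle will be making the identification of universal complexes precise. The Goldman-Millson deformation functor must be enhanced to remember the derived pushforward of the universal local system, not just the underlying formal scheme. This is exactly what the module-over-DGLA viewpoint of this paper accomplishes: the module $\Omega^\bullet_{DR}(L_\rho)$ governs the relative cohomology complex over the deformation of $\rho$, so the formality of the DGLA-module pair translates into a quasi-isomorphism of universal complexes compatible with $\Phi$. Once this enhancement is in place, the invariance of Fitting ideals under isomorphism of coherent sheaves closes the argument, with the hypothesis $\dim H^i(X, L_\rho) = r$ used precisely to ensure that $\fitt_{r-1}$ captures the full scheme structure of $\mathcal{V}^i_r$ locally near $\rho$.
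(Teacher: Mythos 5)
Your proposal has a genuine gap in the first step: the claim that, under the hypothesis $\dim H^i(X,L_\rho)=r$, the scheme structure of $\mathcal{V}^i_r(X,n)_{(\rho)}$ is cut out by $\fitt_{r-1}(\mathcal{H}^i)$ of the cohomology sheaf $\mathcal{H}^i = H^i(\mathcal{F}^\bullet)$ is not correct and is not what \cite{w} says. The jump locus is defined by a determinantal ideal of the \emph{differentials} of the universal complex, and this is genuinely different from the Fitting ideal of the cohomology sheaf, precisely because cohomology does not commute with base change. A simple counterexample: over $R=\cc[[t]]$ take $P^\bullet = [R\xrightarrow{\ t\ }R]$ concentrated in degrees $i,i+1$. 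Then $\dim H^i(P^\bullet\otimes\cc)=1=r$ and the determinantal ideal is $(t)$, so the jump locus is the closed point; but $H^i(P^\bullet)=\ker(t)=0$, so $\fitt_{r-1}(H^i(P^\bullet))=\fitt_0(0)=R$, giving the empty subscheme. The Fitting ideal sees only $H^i(P^\bullet)\otimes A$, whereas the jump locus is governed by $H^i(P^\bullet\otimes A)$, and the two disagree exactly when the differentials degenerate.

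The paper instead sidesteps this entirely. It characterizes formal subschemes by testing against Artinian local rings $A$ and invokes \cite[Proposition 2.1]{w}: the schematic image of $g_1\colon\spec(A)\to\mathbf{R}(X,n)_{(\rho)}$ lies in $\mathcal{V}^i_r(X,n)$ if and only if $H^i(X,L_A)$ is a \emph{free} $A$-module of rank $r$ (note that the coefficient change happens before taking cohomology), and similarly for the resonance side. The isomorphism of $A$-modules $H^i(X,L_A)\cong H^i(A\otimes H^\bullet(X,L_\rho),\wedge\bar\zeta_A)$ was already produced in the proof of Theorem~\ref{thm1} via the DGLA-pair formality argument, so the equivalence of the two freeness conditions follows immediately and the theorem is done. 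This is simpler than your plan: no global universal complex over the formal local ring needs to be constructed, no quasi-isomorphism of such complexes over the complete ring needs to be extracted from the Artinian-level data (itself a nontrivial point you do not address, since the formality comparison runs through a zigzag), and no invariance of Fitting ideals is invoked. If you want to salvage your approach, replace $\fitt_{r-1}(\mathcal{H}^i)$ by the freeness criterion of \cite[Proposition 2.1]{w} and work $A$-by-$A$; at that point you are essentially reproducing the paper's argument.
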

It is very easy to see that when $\dim H^i(X, L_\rho)=r$, $\sR^i_r(X, \rho)$ is the intersection of $C$ with a linear subspace of $Z^1(\pi_1(X), \mathfrak{gl}(n, \cc)_{\ad \rho})$. Therefore, a consequence of Theorem \ref{thm2} is the following. 
\begin{cor}\label{consequence}
Let $\rho\in \br(X, n)$ be a semi-simple representation, and let $r=\dim H^i(X, L_\rho)$. Then $\sV^i_r(X, n)$ has quadratic singularities at $\rho$. 
\end{cor}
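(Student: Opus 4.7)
My plan is to deduce the corollary by combining Theorem \ref{thm2} with the Goldman--Millson description of the cone $C$ as a quadratic cone, together with the author's parenthetical observation that $\sR^i_r(X,\rho)$ is cut out in $C$ by linear equations when $\dim H^i(X, L_\rho) = r$.

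First, by Theorem \ref{thm2}, the formal germ $\sV^i_r(X, n)_{(\rho)}$ is isomorphic to $\sR^i_r(X, \rho)_{(0)}$. So it suffices to show that $\sR^i_r(X, \rho)$ has quadratic singularities at the origin of $Z^1 := Z^1(\pi_1(X), \mathfrak{gl}(n, \cc)_{\ad \rho})$. By \cite{gm}, the cone $C$ is scheme-theoretically defined inside $Z^1$ by the quadratic map
$$
Z^1 \to H^2(X, \enmo(L_\rho)), \qquad \eta \mapsto \bar\eta \wedge \bar\eta,
$$
so $C$ itself has quadratic singularities at $0$.

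Second, I would justify the author's assertion that $\sR^i_r(X, \rho) = C \cap L$ scheme-theoretically, for some linear subspace $L \subset Z^1$, under the hypothesis $\dim H^i(X, L_\rho) = r$. The point is that the Aomoto differentials
$$
d_{i-1}(\eta): H^{i-1}(X, L_\rho) \xrightarrow{\wedge \bar\eta} H^i(X, L_\rho), \qquad d_i(\eta): H^i(X, L_\rho) \xrightarrow{\wedge \bar\eta} H^{i+1}(X, L_\rho)
$$
depend linearly on $\eta$, and since $\dim H^i(X, L_\rho) = r$, the inequality
$$
\dim H^i(H^\bullet(X, L_\rho), \bar\eta) \geq r
$$
is equivalent to the simultaneous vanishing $d_{i-1}(\eta) = 0$ and $d_i(\eta) = 0$. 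The Fitting-ideal scheme structure on $\sR^i_r(X, \rho)$ (to be given in Definition \ref{resonance}) is then generated by the matrix entries of these two maps, all of which are linear forms on $Z^1$; so $\sR^i_r(X, \rho)$ is the intersection of $C$ with the linear subspace $L \subset Z^1$ cut out by these linear forms.

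Finally, choosing coordinates on $Z^1$ adapted to $L$ (i.e., so that $L$ is a coordinate subspace), the defining ideal of $\sR^i_r(X, \rho)$ in the completed local ring of $Z^1$ at $0$ is generated by coordinate functions (defining $L$) together with the quadratic forms defining $C$; restricting to $L$ leaves an ideal generated by quadratic forms, which is exactly what it means for $\sR^i_r(X, \rho)$ to have quadratic singularities at $0$. The main obstacle here is the scheme-theoretic identification $\sR^i_r(X, \rho) = C \cap L$, which requires unwinding the Fitting ideal definition; once that identification is in hand, the rest of the argument is formal.
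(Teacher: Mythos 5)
Your proof is correct and follows essentially the same route as the paper: combine Theorem \ref{thm2} with the observation that for $r=\dim H^i(X,L_\rho)$ the Fitting ideal $I_{k_i-r+1}=I_1$ is generated by the matrix entries of $(\wedge\zeta)_{i-1}\oplus(\wedge\zeta)_i$, which are linear forms, so scheme-theoretically $\sR^i_r(X,\rho)=C\cap L$ for a linear subspace $L\subset Z^1$, and $C$ is a quadratic cone. The paper leaves this identification at the level of ``it is very easy to see'' (both in the remark preceding the corollary and in the proof of Theorem \ref{thm2}); your unwinding of the Fitting-ideal structure is precisely the intended justification.
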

The main part of the paper is to work out Sections 8, 9 and 10 of \cite{dp} in this new setting. The ideas here are very similar to \cite{dp}, but our presentation is closer to \cite{gm}. We will give complete arguments to the main theorems without quoting the arguments of \cite{dp}, except some definitions and immediate properties of the definitions. 

In section 4, we will use DGLA pairs to study the cohomology jump loci in the moduli space of stable holomorphic vector bundles of rank $n$. In this case, since the tangent space is represented by forms of pure type (0, 1), it allows us to work with $H^{p, q}$ cohomology jump loci. We will show that they are also controlled by a DGLA pair which is formal. Then we can prove the parallel versions of Theorem \ref{thm1} and Theorem \ref{thm2}, which will appear as Theorem \ref{thm3}. 

In the last section, we will mention some application to conditions on fundamental groups of K\"ahler manifolds and some potential connections with absolute constructible sets defined by Simpson. 

Some conventions we use in this paper are the following. We will compress the base point $x\in X$, and only write $\pi_1(X)$ for $\pi_1(X, x)$. The subscript ($\cdot$) means either the formal scheme at a point or the completion of the localization of the structure sheaf  at some point. For example, $\br(X, n)_{(\rho)}$ is the formal scheme of $\br(X, n)$ at point $\rho$, and $(\sO_{\br(X, n)})_{(\rho)}$ is the completion of the local ring $\sO_{\br(X, n), \rho}$. We will frequently use the same notation for a point in a moduli space and the object which that point corresponds to. All the rings and vector spaces are over $\cc$, and $\dim$ is always taking the complex dimension unless we specify with a subscript. When we say an Artinian local ring, we always assume the ring to be finitely generated over $\cc$.

\section{DGLA pairs and the cohomology functor}
We recall the definition of a differential graded Lie algebra (DGLA) over $\mathbb{C}$, as defined in \cite{gm}. 

\begin{defn}
A DGLA consists of the following set of data,
\begin{enumerate}
\item a graded vector space $C=\bigoplus_{i\in \mathbb{Z}}C^i$ over $\mathbb{C}$,
\item a Lie bracket which is bilinear, graded commutative and satisfies the graded Jacobi identity, i.e., for any $\alpha\in C^i, \beta\in C^j$ and $\gamma\in C^k$,
$$[\alpha, \beta]=(-1)^{ij}[\beta, \alpha]$$
and
$$(-1)^{ki}[\alpha, [\beta, \gamma]]+(-1)^{ij}[\beta, [\gamma, \alpha]]+(-1)^{jk}[\gamma, [\alpha, \beta]]=0$$
\item a family of linear maps, called the differential maps, $d^i: C^i\to C^{i+1}$, satisfying $d^{i+1}d^i=0$ and the Leibniz rule, i.e., for $\alpha\in C^i$ and $\beta\in C$
$$d[\alpha, \beta]=[d\alpha, \beta]+(-1)^i[\alpha, d\beta]$$
where $d=\sum d^i: C\to C$. 
\end{enumerate}
A homomorphism of DGLAs is a linear map which preserves the grading, Lie bracket, and the differential maps. 
\end{defn}
We denote this DGLA by $(C, d)$, or $C$ when there is no risk of confusion. 

\begin{defn}\label{module}
Given a DGLA $(C, d_C)$, we define a module over $(C, d_C)$ to be the following set of data,
\begin{enumerate}
\item a graded vector space $M=\bigoplus_{i\in \mathbb{Z}} M^i$ together with a bilinear multiplication map $C\times M\to M$, $(a, \xi)\mapsto a\xi$, such that for any $\alpha\in C^i$ and $\xi\in M^j$, $\alpha\xi \in M^{i+j}$. And furthermore, for any $\alpha\in C^i, \beta\in C^j$ and $\zeta\in M$, we require
$$[\alpha, \beta]\zeta=\alpha(\beta\zeta)-(-1)^{ij}\beta(\alpha\zeta).$$
\item a family of linear maps $d^i_M: M^i\to M^{i+1}$ (write $d_M=\sum_{i\in\zz} d^i_M: M\to M$), satisfying $d^{i+1}_M d^i_M=0$. And we require it to be compatible with the differential on $C$, i.e., for any $\alpha\in C^i$, 
$$d_M(\alpha\xi)=(d_C\alpha)\xi+(-1)^i\alpha(d_M\xi).$$
\end{enumerate}
\end{defn}

We will call such a module by a $(C, d_C)$-module or simply a $C$-module. 

\begin{defn}
A homomorphism of $(C, d_C)$-modules $f: (M, d_M)\to (N, d_N)$ is a linear map $f: M\to N$ which satisfies
\begin{enumerate}
\item $f$ preserves the grading, i.e., $f(M^i)\subset N^i$,
\item $f$ is compatible with multiplication by elements in $C$, i.e., $f(\alpha\xi)=\alpha f(\xi)$, for any $\alpha\in C$ and $\xi \in M$,
\item $f$ is compatible with the differentials, i.e., $f(d_M\alpha)=d_N f(\alpha)$.
\end{enumerate}
\end{defn}
Fixing a DGLA $(C, d_C)$, the category of $C$-modules is an abelian category. 

\begin{defn}
A DGLA pair is a DGLA $(C, d_C)$ together with a $(C, d_C)$-module $(M, d_M)$. Usually, we write such a pair simply by $(C, M)$. A homomorphism of DGLA pairs $g: (C, M)\to (D, N)$ consists of a map $g_1: C\to D$ of DGLA and a $C$-module homomorphism $g_2: M\to N$, considering $N$ as a $C$-module induced by $g_1$. We call $g$ a homotopy equivalence if $g_1$ and $g_2$ both induce isomorphisms on the cohomology groups. And we define two DGLA pairs to be of the same homotopy type, if they can be connected by a zigzag of homotopy equivalences. 
\end{defn}
Let $(C, M)$ be a DGLA pair. Then $H^\bullet(C)$, the cohomology of $C$ with zero differentials, is a DGLA. $H^\bullet(M)$, the cohomology of $M$ with zero differentials, is an $H^\bullet(C)$-module. 
\begin{defn}
We call the DGLA pair $(H^\bullet(C), H^\bullet(M))$ the \textbf{cohomology DGLA pair} of $(C, M)$. Moreover, we say DGLA pair $(C, M)$ is \textbf{formal}, if $(C, M)$ is of the same homotopy type as $(H^\bullet(C), H^\bullet(M))$. 
\end{defn}

Given a DGLA $(C, d)$ over $\mathbb{C}$ together with an Artinian local ring $A$ of finite type over $\mathbb{C}$, a groupoid $\mathcal{C}(C, A)$ is defined in \cite{gm}, which in the language of \cite{dp} is the space of flat connections. We recall their definition.

$C\otimes_{\cc}A$ is naturally a DGLA by letting $[\alpha\otimes a, \beta\otimes b]=[\alpha, \beta]\otimes ab$ and $d(\alpha\otimes a)=d\alpha \otimes a$. Let $m$ be the maximal ideal in $A$. Then under the same formula, $C\otimes_{\cc} m$ is also a DGLA. Since $(C\otimes_{\cc}m)^0=C^0\otimes_{\cc}m$ is a nilpotent Lie algebra, the Campbell-Hausdorff multiplication defines a nilpotent Lie group structure on the space $C^0\otimes m$. We denote this Lie group by $\exp(C^0\otimes m)$. Now, Lie group $\exp(C^0\otimes m)$ acts on $C^1$ by
$$\overline\exp(\lambda): \alpha \mapsto \exp(\ad \lambda)\alpha+\frac{1-\exp(\ad \lambda)}{\ad \lambda}(d\lambda)$$
in terms of power series. 

\begin{defn}\label{cat}
Category $\sC(C; A)$ is defined to be the category with objects
$$\obj \sC(C; A)=\{\omega\in C^1\otimes_{\cc}m\;|\; d\omega+\frac{1}{2}[\omega, \omega]=0\},$$
and with the morphisms between two elements $\omega_1$, $\omega_2$
$$\morph(\omega_1, \omega_2)=\{\lambda\in \exp(C^0\otimes m)\;|\; \exp(\lambda)\omega_1=\omega_2\}.$$
\end{defn}
Given any $\omega\in \obj\sC(C; A)$, it gives rise to a new DGLA, $(C\otimes A, d_C\otimes \id_A+\ad \omega)$. The following equivalence theorem was observed by Deligne \cite{gm} and Schlessinger-Stasheff \cite{ss}. 
\begin{theorem}\label{deligne}
Let $\psi: C\to D$ be a homomorphism of DGLAs. Suppose the induced map on cohomology $H^i(\psi): H^i(C)\to H^i(D)$ are isomorphisms for $i=0, 1$ and injective for $i=2$. Let $A$ be any Artinian local ring of finite type over $\cc$. Then the induced functor
$$\psi_*: \sC(C; A)\to \sC(D; A)$$
is an equivalence of groupoids. In particular, if $C$ and $D$ have the same homotopy type, then the set of isomorphism classes $\iso \sC(C; A)=\iso \sC(D; A)$. 
\end{theorem}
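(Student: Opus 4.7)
The plan is to proceed by induction on the length of the Artinian local ring $A$. Every Artinian local ring of finite type over $\cc$ admits a filtration realising it as an iterated \emph{small extension}, i.e., a surjection $A' \twoheadrightarrow A$ whose kernel $J$ is a principal ideal annihilated by the maximal ideal $\mathfrak{m}_{A'}$; in particular $J^2 = 0$. The base case $A = \cc$ is trivial, since both groupoids reduce to a single object with only the identity morphism. It therefore suffices to prove the inductive step: assuming $\psi_*: \sC(C; A) \to \sC(D; A)$ is an equivalence, show that so is $\psi_*: \sC(C; A') \to \sC(D; A')$ for every small extension $A' \twoheadrightarrow A$.

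The central linear-algebraic input is to identify the fiber of the restriction functor $\sC(C; A') \to \sC(C; A)$ over a fixed $\omega_A$. Writing any extension as $\omega' = \tilde\omega + \eta$ for some set-theoretic lift $\tilde\omega \in C^1 \otimes \mathfrak{m}_{A'}$ of $\omega_A$ and $\eta \in C^1 \otimes J$, the relations $J^2 = J \cdot \mathfrak{m}_{A'} = 0$ collapse the Maurer--Cartan equation to the linear condition $d\eta = 0$ and make the class of $\frac{1}{2}[\tilde\omega, \tilde\omega] + d\tilde\omega$ in $H^2(C) \otimes J$ the sole obstruction to the existence of any extension. Likewise, for $\mu \in C^0 \otimes J$ the Campbell--Hausdorff series collapses and the gauge action on the fiber is simply $\eta \mapsto \eta - d\mu$, so the set of isomorphism classes of extensions of $\omega_A$ is a torsor under $H^1(C) \otimes J$ when nonempty, and the automorphism group of any fixed extension lying above $\id_{\omega_A}$ is the additive group $H^0(C) \otimes J$. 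The analogous statements hold verbatim for $D$, and $\psi_*$ intertwines all of these torsor structures through the maps $H^i(\psi) \otimes \id_J$ for $i = 0, 1, 2$.

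Combining this fiberwise picture with the hypotheses finishes the inductive step. For essential surjectivity, given $\omega'_D \in \sC(D; A')$ with restriction $\omega_D$, the induction hypothesis provides $\omega_C \in \sC(C; A)$ together with an isomorphism $\psi_*\omega_C \cong \omega_D$; the mere existence of $\omega'_D$ kills the obstruction class for $\omega_D$ in $H^2(D) \otimes J$, and the injectivity of $H^2(\psi)$ forces the corresponding obstruction for $\omega_C$ to vanish as well, so some lift $\omega'_C$ exists. The surjectivity of $H^1(\psi) \otimes \id_J$ on the torsor of lifts then lets us translate $\omega'_C$ within its fiber so that the chosen isomorphism $\psi_*\omega_C \cong \omega_D$ extends to an isomorphism $\psi_*\omega'_C \cong \omega'_D$. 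Fullness and faithfulness proceed similarly: a morphism $\psi_*\omega'_1 \to \psi_*\omega'_2$ is lifted modulo $J$ by induction, and the $H^0$- and $H^1$-isomorphisms then let one correct and uniquely extend the lift across the small extension. The principal obstacle is the bookkeeping in this morphism step: one must verify that the torsor of lifts of a given morphism and the torsor of its automorphisms are each controlled by the appropriate cohomology group of $C$, and that $\psi_*$ identifies these torsors with their $D$-counterparts via the hypotheses on $H^0(\psi)$ and $H^1(\psi)$.
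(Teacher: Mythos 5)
The paper does not prove Theorem~\ref{deligne}: it is quoted as a result of Deligne (as recorded in \cite{gm}) and Schlessinger--Stasheff \cite{ss}. Your small-extension induction is precisely the standard argument found in those references, and it is correct. The obstruction/torsor bookkeeping you defer is exactly what one expects: across a small extension with kernel $J$, lifting a Maurer--Cartan element is obstructed in $H^2\otimes J$ and the relative-gauge classes of lifts form an $H^1\otimes J$-torsor, while lifting a morphism is obstructed in $H^1\otimes J$ and the lifts of a fixed morphism form an $H^0\otimes J$-torsor; all of these are strictly intertwined by $\psi$, so injectivity of $H^2(\psi)$ gives essential surjectivity of lifts, surjectivity of $H^1(\psi)$ lets you translate a lift to match a given one downstairs, injectivity of $H^1(\psi)$ gives fullness, surjectivity of $H^0(\psi)$ lets you match a given morphism downstairs, and injectivity of $H^0(\psi)$ gives faithfulness. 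Two minor imprecisions worth polishing in a full write-up: the Maurer--Cartan equation for $\tilde\omega+\eta$ reads $d\eta = -\bigl(d\tilde\omega+\tfrac12[\tilde\omega,\tilde\omega]\bigr)$, so it only becomes the homogeneous condition $d\eta=0$ after re-choosing $\tilde\omega$ once the obstruction class is known to vanish; and ``isomorphism classes of lifts form a torsor under $H^1(C)\otimes J$'' should be read with respect to the relative gauge group $\exp(C^0\otimes J)$ only (the full gauge group of $A'$ may further identify classes via automorphisms of $\omega_A$, but the relative torsor is what the inductive step actually uses).
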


\begin{ex}
One important example we should keep in mind is the following (see \cite{gm} 9.4). Let $X$ be a compact K\"ahler manifold, and let $E$ be a holomorphic vector bundle on $X$. Then $\enom(E)\cong E\otimes E^{\vee}$ is also a holomorphic vector bundle on $X$. Let $(\mathcal End(E)^{0, \bullet}, \bar\partial)$ be the Dolbeault complex with values in $\enom(L)$. For any artinian local ring $A$, the isomorphism classes of the groupoid $\sC(\mathcal End(E)^{0. \bullet}; A)$ correspond to the isomorphism classes of extensions $E_A$ of $E$, that is a locally free $\sO_X\otimes_{\cc} A$ module $E_A$ on $X$ such that $E_A\otimes_A \cc\cong E$. 
\end{ex}

\begin{defn}\label{cohfun}
Given a DGLA pair $(C, M)$, we define a functor $\hh^i_M: \sC(C; A)\to \mathfrak{Mod}(A)$ for each $i\in \zz$, where $\mathfrak{Mod}(A)$ is the category of $A$-modules. In fact, $(M\otimes_{\cc}A, d_M\otimes \id_A)$ is naturally a $(C\otimes A, d_C\otimes \id_A)$-module. It follows from property (1) of Definition \ref{module}, that for any $\omega\in \obj(\sC(C; A))$, $d_M\otimes \id_A+\omega$ is a differential on $M\otimes A$, i.e., $(d_M\otimes \id_A+\omega)^2=0$. Therefore, $(M\otimes A, d_M\otimes \id_A+\omega)$ is a $(C\otimes A, d_C\otimes \id_A+\ad \omega)$-module. We define $\hh^i_M(\omega)=H^i(M\otimes A, d_M\otimes \id_A+\omega)$. Since $(M\otimes A, d_M\otimes \id_A+\omega)$ is a complex of $A$-modules, $\hh^i_M(\omega)$ has naturally an $A$-module structure. Given any $\lambda\in C^0\otimes m$, $\exp(\lambda)$ acts on $M\otimes A$ by the usual power series action 
$$\exp(\lambda)(\xi)=\xi+\lambda\xi+\frac{1}{2}\lambda(\lambda\xi)+\cdots$$
for any $\xi\in M\otimes A$. And since $C^0\otimes m$ is nilpotent, the sum above is indeed a finite sum. 

A direct computation shows that for any $\lambda\in C^0\otimes m$, the commutativity of the following diagram can be deduced from the next lemma. 
\[
\xymatrixcolsep{8pc}\xymatrix{
M^i\otimes A\ar[r]^{d_M\otimes \id_A+\omega}\ar[d]^{\exp(\lambda)}& M^{i+1}\otimes A\ar[d]^{\exp(\lambda)}\\
M^i\otimes A\ar[r]^{d_M\otimes \id_A+\overline\exp(\lambda)(\omega)}&M^{i+1}\otimes A
}
\]
Hence, $\exp(\lambda): M\to M$ induces a map of complexes of $A$-modules,
\begin{equation}\label{exlambda}
\exp(\lambda): (M\otimes A, d_M\otimes \id_A+\omega)\to (M\otimes A, d_M\otimes \id_A+\exp(\lambda)(\omega))
\end{equation}
We define a functor $\hh^i_M: \sC(C; A)\to \mathfrak{Mod}(A)$ mapping an object $\omega\in C^1\otimes_{\cc} m$ to $H^i(M\otimes_\cc A, d_M\otimes \id_A+\omega)$, and mapping a morphism $\exp(\alpha)$ to the induced map on cohomology. The above argument shows that $\hh^i_M$ is a well defined functor. By putting $\hh_M=\bigoplus_{i\in \zz}\hh^i_M$, we obtain a functor $\hh_M: \sC(C; A)\to \mathfrak{Mod}_\zz(A)$, where $\mathfrak{Mod}_\zz(A)$ is the category of $\zz$ graded $A$-modules. 
\end{defn}

\begin{lemma}
Under the above notations, for any $\lambda\in C^0\otimes m$, $\omega\in \obj(\sC(C; A))$, and $\xi\in M\otimes A$, the following equations hold.

\begin{equation}
\exp(\lambda)(\omega\xi)=(\exp(\ad \lambda)\omega)\exp(\lambda)\xi
\end{equation}
\begin{equation}
\exp(\lambda)d\xi=d(\exp(\lambda)\xi)+\left(\frac{1-\exp(\ad \lambda)}{\ad \lambda}d\lambda\right)\exp(\lambda)\xi
\end{equation}
\end{lemma}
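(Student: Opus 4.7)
The plan is to treat both identities as formal power series in $\lambda$, establish the single-step commutation rules coming from the module axioms, and then iterate to assemble the exponential series. Because $C^0 \otimes m$ is nilpotent, all sums truncate and we never have convergence issues.

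For equation (1), the starting point is the module identity from Definition \ref{module}, specialised to $\lambda \in C^0$: since $(-1)^{0 \cdot j}=1$, we obtain the clean commutation rule
\[
\lambda(\omega\xi)=[\lambda,\omega]\xi+\omega(\lambda\xi)=(\ad\lambda)(\omega)\cdot \xi+\omega(\lambda\xi).
\]
Iterating and grouping terms by a Leibniz-style induction yields the binomial expansion
\[
\lambda^n(\omega\xi)=\sum_{k=0}^{n}\binom{n}{k}\bigl((\ad\lambda)^k \omega\bigr)\bigl(\lambda^{n-k}\xi\bigr).
\]
Dividing by $n!$, summing over $n$, and reindexing by $(k,m)=(k,n-k)$ produces exactly $(\exp(\ad\lambda)\omega)\,\exp(\lambda)\xi$, proving the first identity.

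For equation (2), the Leibniz rule on $(C,M)$ gives, again using that $\lambda$ has degree $0$, the single-step rule $d(\lambda\xi)=(d\lambda)\xi+\lambda(d\xi)$. Iterating this gives
\[
d(\lambda^{n}\xi)=\sum_{k=0}^{n-1}\lambda^{k}\,(d\lambda)\,\lambda^{n-1-k}\xi\;+\;\lambda^{n}(d\xi).
\]
To bring this into the stated closed form, I would apply equation (1) (with $\omega$ replaced by $d\lambda$) inside each summand to push the factors of $\lambda$ on the left past $d\lambda$, converting them into iterated adjoint actions $(\ad\lambda)^{j}(d\lambda)$ sitting to the left of a pure $\lambda^{n-1-j}\xi$. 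Reorganising the double sum and dividing by $n!$ collapses the combinatorics into
\[
\sum_{n\geq 0}\frac{1}{n!}d(\lambda^{n}\xi)=d\,\exp(\lambda)\xi+\left(\sum_{j\geq 0}\frac{(-1)^{j}(\ad\lambda)^{j}}{(j+1)!}d\lambda\right)\exp(\lambda)\xi,
\]
and recognising the parenthesised series as $\frac{1-\exp(\ad\lambda)}{\ad\lambda}\,d\lambda$ (note the sign convention matches the definition of $\overline\exp(\lambda)$ given earlier) finishes the proof.

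The two conceptual moves — the module commutation rule and the graded Leibniz rule — are both one line; the main obstacle is purely bookkeeping: verifying that the binomial identity $\sum_{k+m=n}\tfrac{1}{k!m!}=\tfrac{1}{n!}$ and its Leibniz-Pascal variant for (2) really do produce the exponential generating series with the correct rational coefficients $\frac{(-1)^{j}}{(j+1)!}$. Once the induction for (1) is in hand, it is used as a lemma inside (2), so the order of proof matters and should be kept as written.
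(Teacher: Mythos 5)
Your approach is sound and genuinely different from the paper's. The paper treats both sides as formal power series in the noncommutative words $\lambda^p\omega\lambda^q\xi$ (respectively $\lambda^p(d\lambda)\lambda^q\xi$) and compares coefficients of each word, reducing (1) to the vanishing identity $\sum_{i=0}^q(-1)^i\binom{q}{i}=\delta_{q,0}$ and reducing (2) to the evaluation of the Beta integral $\int_0^1(1-t)^qt^p\,dt$. You instead read the module axioms as a one-step commutation rule $\lambda(\omega\xi)=(\ad\lambda)(\omega)\xi+\omega(\lambda\xi)$ and a one-step Leibniz rule $d(\lambda\xi)=(d\lambda)\xi+\lambda(d\xi)$, iterate to get closed binomial forms for $\lambda^n(\omega\xi)$ and $d(\lambda^n\xi)$, and then sum. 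This is intrinsic (it works directly in an arbitrary $C$-module, with no appeal to a universal expansion of words), and the only combinatorics you need beyond $\binom{n}{k}/n!=1/(k!(n-k)!)$ is the hockey-stick identity $\sum_{k=j}^{n-1}\binom{k}{j}=\binom{n}{j+1}$ for (2); the paper's route is shorter to write once one accepts coefficient comparison, but its identities are less self-explanatory. Using (1) as a lemma inside (2) is also a cleaner organization than the paper's two independent coefficient checks.

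Two writeup slips you should correct. First, the coefficient series has no alternating sign: since $\dfrac{1-e^t}{t}=-\sum_{j\ge 0}\dfrac{t^j}{(j+1)!}$, the parenthesised operator should be $-\sum_{j\ge 0}\dfrac{(\ad\lambda)^j}{(j+1)!}\,d\lambda$, not $\sum_{j\ge 0}\dfrac{(-1)^j(\ad\lambda)^j}{(j+1)!}\,d\lambda$; the latter is $\dfrac{1-\exp(-\ad\lambda)}{\ad\lambda}\,d\lambda$. Second, the left-hand side of your final display is $\sum_{n\ge 0}\frac{1}{n!}\,d(\lambda^n\xi)$, which by linearity of $d$ is exactly $d(\exp(\lambda)\xi)$ — so as written the display forces the parenthesised term to vanish. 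What you mean is
\begin{equation*}
\exp(\lambda)(d\xi)\;=\;\sum_{n\ge 0}\frac{1}{n!}\lambda^n(d\xi)\;=\;d\bigl(\exp(\lambda)\xi\bigr)-\Bigl(\sum_{j\ge 0}\tfrac{(\ad\lambda)^j}{(j+1)!}\,d\lambda\Bigr)\exp(\lambda)\xi,
\end{equation*}
which is obtained by subtracting the iterated Leibniz expansion of $d(\lambda^n\xi)$ from $\lambda^n(d\xi)$, then applying your formula (1) to push the $\lambda^k$ past $d\lambda$ and collapsing with the hockey-stick identity. With those corrections the argument is complete.
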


\begin{proof}
For (1), we expend the right side of the equation, and calculate the coefficient of term $\lambda^p \omega \lambda^q \xi$. It is equal to 
\begin{align*}
\sum_{i=0}^q(-1)^{q-i}\,\frac{1}{i!}\frac{1}{(p+q-i)!}{p+q-i\choose p}&=\sum_{i=0}^q(-1)^{q-i}\,\frac{1}{i!p!(q-i)!}\\
&=(-1)^q\frac{1}{p!q!}\sum_{i=0}^q\left((-1)^{i}{p \choose i}\right)
\end{align*}
The last sum is zero unless $q=0$, and in this case, the coefficient is $\frac{1}{p!}$. This is exactly the coefficient of $\lambda^p\omega\xi$ on the left side of the equation. 

To show (2), by comparing the coefficient of the term $\lambda^p(d\lambda)\lambda^q\xi$, we are lead to show the following equality,
$$\frac{1}{(p+q+1)!}=\sum_{i=0}^{q}\frac{(-1)^{p-i}}{i! (p+q-i+1)!} {p+q-i\choose p}$$ 
and this is equivalent to
$$\frac{p!q!}{(p+q+1)!}=\sum_{i=0}^q\frac{(-1)^{q-i}}{p+1+q-i}{q\choose i}.$$
Now, the right side is equal to $\int_0^1(1-t)^qt^pdt$. And by induction on $q$, we can easily show the integration is equal to $\frac{p!q!}{(p+q+1)!}$. 
\end{proof}

Let $g=(g_1, g_2): (C, M)\to (D, N)$ be a homomorphism of DGLA pairs, and let $g_{1*}: \sC(C; A)\to \sC(D; A)$ be the induced functor on the groupoids. For any $\omega\in \obj\sC(C; A)$, $g_2: M\to N$ induces a homomorphism of complexes, or more precisely, a homomorphism of $(C\otimes A, d_C\otimes \id_A+\ad\omega)$-modules, 
\begin{equation}\label{phi2}
g_2\otimes \id_A: (M\otimes A, d_M\otimes \id_A+\omega)\to (N\otimes A, d_N\otimes \id_A+g_1(\omega)).
\end{equation}
Here $(N\otimes A, d_N\otimes \id_A+g_1(\omega))$ has a $(C\otimes A, d_C\otimes \id_A+\ad\omega)$-module structure induced from the homomorphism of DGLA 
$$g_1\otimes \id_A: (C\otimes A, d_C\otimes \id_A+\ad\omega)\to (D\otimes A, d_D\otimes id_A+\ad (g_1(\omega))).$$
Taking cohomology of the homomorphism (\ref{phi2}), we obtain a homomorphism of $\zz$ graded $A$-modules,
$$\hh(g_2): \hh_M(\omega)\to \hh_N(g_{1*}(\omega)).$$

\begin{prop}\label{homequ}
Under the notations as the previous paragraph, if $g$ is a homotopy equivalence, then $\hh(g_2)$ is an isomorphism. 
\end{prop}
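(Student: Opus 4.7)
The plan is to reduce to the case of a single homotopy equivalence $g = (g_1, g_2)$ (since by definition, a general homotopy equivalence of DGLA pairs is a zigzag, and isomorphisms compose), and then exploit the $m$-adic filtration on $A$ coming from its maximal ideal $m$. Because $A$ is Artinian and finitely generated over $\cc$, $m$ is nilpotent, so this filtration has finite length.

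Concretely, I would filter the complex $(M\otimes A, d_M\otimes \id_A+\omega)$ by the subcomplexes $F^kM := M\otimes m^k$, and similarly set $F^kN := N\otimes m^k$. The key observation is that $\omega\in C^1\otimes m$, so multiplication by $\omega$ shifts the $m$-adic filtration up by one; hence $d_M\otimes \id_A+\omega$ preserves $F^\bullet M$, and the induced differential on the associated graded $F^kM/F^{k+1}M\cong M\otimes (m^k/m^{k+1})$ is simply $d_M\otimes \id$. Likewise, on $\mathrm{gr}^k(N\otimes A)$ the induced differential is $d_N\otimes \id$. The map $g_2\otimes \id_A$ from (\ref{phi2}) obviously respects this filtration, and the induced map on the $k$-th associated graded piece is
\[
g_2\otimes \id_{m^k/m^{k+1}}\colon (M\otimes m^k/m^{k+1},\, d_M\otimes \id)\to (N\otimes m^k/m^{k+1},\, d_N\otimes \id).
\]

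Since $g_2\colon M\to N$ induces an isomorphism on cohomology (this is part of the definition of a homotopy equivalence of DGLA pairs), tensoring with the $\cc$-vector space $m^k/m^{k+1}$ preserves this property; therefore the map on each associated graded piece is a quasi-isomorphism. Now I would conclude by a standard five-lemma induction on $k$ going from $k = \dim A$ downward to $k=0$: at each step the short exact sequence $0\to F^{k+1}\to F^k\to \mathrm{gr}^k\to 0$ (with its analogue for $N$) fits into a ladder of long exact sequences in cohomology, and the five lemma shows the map on $H^i(F^k)$ is an isomorphism given that it is for $H^i(F^{k+1})$ and $H^i(\mathrm{gr}^k)$. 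For $k$ large enough $F^k=0$, so the induction starts trivially, and at $k=0$ we recover the statement $\hh(g_2)\colon \hh_M(\omega)\xrightarrow{\cong}\hh_N(g_{1*}(\omega))$.

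There is no real obstacle here; the only thing one must be careful about is checking that $\omega$ (and $g_1(\omega)$) genuinely lie in the positive part of the $m$-adic filtration, so that they disappear on the associated graded and leave only the original untwisted differentials $d_M$ and $d_N$. Once that is verified, finiteness of the filtration (Artinian hypothesis) allows the spectral sequence / five-lemma induction to converge without any subtlety.
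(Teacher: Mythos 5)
Your proof is correct and is essentially the same argument as the paper's: both rely on the nilpotence of $m$, the fact that $\omega$ (resp.\ $g_1(\omega)$) has coefficients in $m$ so that the twisted differential degenerates to the untwisted one on successive quotients, and a five-lemma induction that ultimately reduces everything to the quasi-isomorphism $g_2\colon (M,d_M)\to(N,d_N)$. The only differences are cosmetic: the paper first reduces to the case $g_1=\mathrm{id}_C$ (you sidestep this because the map on associated graded is just $g_2\otimes\mathrm{id}$ regardless), and the paper inducts on $\dim_\cc A$ by peeling off a one-dimensional ideal $\cc\subset m^l$ via the short exact sequence $0\to\cc\to A\to A'\to 0$, whereas you use the coarser $m$-adic filtration $F^kM=M\otimes m^k$; both amount to the same finite filtration argument, and your variant has the small advantage that exactness of $0\to F^{k+1}\to F^k\to F^k/F^{k+1}\to 0$ is automatic over the field $\cc$, so the flatness remark the paper makes is not needed.
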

\begin{proof}
First, notice that if we consider $N$ as a $C$-module, then $\hh_N(g_{1*}(\omega))$ would be naturally isomorphic to $\hh_N(\omega)$. Therefore, without loss of generality, we can assume that $C=D$ and $g_1=\id_C$. 

Under the above assumption, we prove the proposition by induction on the dimension of $A$ as a $\cc$ vector space. When $\dim A=1$, that is $m=0$, the proposition is trivial. Suppose the proposition is true for Artinian local rings whose dimension as a $\cc$ vector space is less than $\dim A$. Now, in $A$ there is always an ideal which is isomorphic to $\cc$ as a $\cc$ vector space. For example, let $l$ be the largest integer such that $m^l$ is nonzero. Then any one dimensional subspace of $m^l$ would work. Hence, we have a short exact sequence
$$0\to \cc\to A\to A'\to 0$$
where $A'=A/\cc$, considering $\cc$ as an ideal of $A$. This is indeed a short exact sequence of $A$-modules. Take the tensor product of the complex $(M\otimes A, d_M\otimes \id_A+\omega)$ with $0\to \cc\to A\to A'\to 0$ over $A$. Since $M^i\otimes A$ are flat $A$-modules, we obtain a short exact sequence of complexes,
$$
0\to(M, d_M)\to (M\otimes A, d_M\otimes \id_A+\omega)\to (M\otimes A', d_M\otimes \id_A'+\omega')\to 0
$$
where $\omega'$ is the image of $\omega$ under the functor $\sC(C; A)\to \sC(C; A')$ induced by $A\to A'$. Thus we have a long exact sequence
\begin{equation}\label{long1}
\cdots\to H^i(M, d_M)\to \hh^i_M(\omega)\to \hh^i_M(\omega')\to \cdots
\end{equation}
Doing the same for $C$-module $N$, we also obtain a long exact sequence
\begin{equation}\label{long2}
\cdots\to H^i(N, d_N)\to \hh^i_N(\omega)\to \hh^i_N(\omega')\to \cdots
\end{equation}
Moreover, $g_2: M\to N$ induces a map $g_{2*}$ between the two long exact sequences (\ref{long1}) and (\ref{long2}). By the assumption that $g_2$ is a homotopy equivalence, the map $g_{2*}: H^i(M, d_M)\to H^i(N, d_N)$ is an isomorphism for every $i\in \zz$. And by induction hypothesis, $g_{2*}: \hh^i_M(\omega')\to \hh^i_N(\omega')$ is also an isomorphism for every $i\in \zz$. By the 5-lemma, $g_{2*}: \hh^i_M(\omega)\to \hh^i_N(\omega)$ is an isomorphism as well, and this is degree $i$ component of the map $\hh(g_2)$ in the proposition. 
\end{proof}
In fact, we don't need $g_1$ to be homotopy equivalence to prove the proposition. Removing it from the definition of a homomorphism of DGLA pairs to be homotopy equivalence will not affect the rest of the paper. 

\section{Cohomology jump loci in the representation varieties}
Let us first assume $X$ to be a smooth manifold with a base point $x\in X$. Let $\mathbf{R}(X, n)$ be the representation variety of $X$. The cohomology jump loci $\sV^i_k(X)$ in the representation variety $\mathbf{R}(X, n)$ are defined in \cite[9.1]{dp} as the subscheme associated to the fitting ideal of some universal cochain complex $P^\bullet$. Here $P^\bullet$ is a complex of locally free coherent sheaves on $\mathbf{R}(X, n)$. Instead of repeating their definition here, we give a characterization of $P^\bullet$. 

Over $X\times \br(X, n)$ there is a universal local system $\sL$. Denote the projections from $X\times \br(X, n)$ to its first and second factor by $p_1$ and $p_2$ respectively. Then $\sL$ is a $p_2^*(\sO_{\br(X, n)})$ module, satisfying the following property. For every closed point $\rho\in \mathbf{R}(X, n)$, the local system $L_{\rho}$ associated to the representation $\rho$ is isomorphic to $\sL|_{X\times\{\rho\}}$. The universal cochain complex $P^\bullet$ is a complex of free sheaves on $\br(X, n)$ satisfying for any $\sO_{\br(X, n)}$-module $G$: there is an isomorphism, functorial on $G$,
\begin{equation}\label{char}
\mathbf{R}p_{2*}^i(\sL\otimes_{p^*_2\sO_{\br(X, n)}} p^*_2 G)\cong H^i(P^\bullet \otimes_{\sO_{\br(X, n)}} G).
\end{equation}
In particular, let $S$ be any noetherian ring over $\cc$, and let $s: \spec(S)\to \br(X, n)$ be a homomorphism of schemes. Denote $X\times \spec(S)$ by $X_S$ and denote the pull-back of $\sL$ by $\id_X\times s: X\times \spec(S)\to X\times \br(X, n)$ by $L_S$. Regarding $S$ as an $\sO_{\br(X, n)}$-module via $s$, then (\ref{char}) implies
\begin{equation}\label{iso2}
H^i(X_S, L_S)\cong \Gamma(\br(X, n), H^i(P^\bullet \otimes_{\sO_{\br(X, n)}} S)).
\end{equation}

Let $d_{i-1}: P^{i-1}\to P^i$ and $d_i: P^i\to P^{i+1}$ be the differentials of $P^\bullet$, and denote the rank of $P^i$ by $l_i$. As in \cite{dp}, $\sV^i_k(X, n)$ is defined by the determinantal ideal $I_{l_i-k+1}(d_{i-1}\oplus d_i)$. Then the basic property of determinantal ideals and the isomorphism (\ref{iso2}) implies that the closed points in $\sV^i_k(X, n)$ are exactly $\{\rho\in \br(X, n)\;|\;\dim H^i(X, L_\rho)\geq k\}$. 

Fixing a closed point $\rho\in \br(X, n)$, we denote the associated local system of $\rho$ by $L_\rho$, and denote the $C^\infty$ de Rham complex of $L_\rho$ by $(\odr(L_\rho), \nabla)$. Let $\enmo(L_\rho)$ be the local system of local sections of endomorphisms of $L_\rho$. Similarly, denote the $C^\infty$ de Rham complex of $\enmo(L_\rho)$ by $(\sEnd(L_\rho), \nabla)$. Then $(\sEnd(L_\rho), \odr(L_\rho))$ is a DGLA pair. Restricting to $x\in X$ defines a homomorphism of Lie algebras $\mathcal End^0(L_\rho)\to \enmo(L_\rho)|_x$. If we consider $\enmo(L_\rho)|_x$ as a DGLA with only nonzero elements in degree zero, then we have naturally a DGLA homomorphism $\varepsilon_x: \sEnd(L_\rho)\to \enmo(L_\rho)|_x$. Denote the kernel of $\varepsilon _x$ by $\sEnd_0(L_\rho)$.
According to \cite[6.8]{gm}, the deformation theory of $\br(X, n)$ at $\rho$ is controlled by the DGLA $(\sEnd_0(L_\rho), \nabla)$. 

\begin{theorem}[Goldman-Millson]\label{gm1}
Denote the complete ring $(\sO_{\br(X, n)})_{(\rho)}$ by $R$. The functor $A\mapsto \iso \sC(\sEnd_0(L_\rho); A)$ from the category of Artinian local rings to the category of sets is represented by $R$. 
\end{theorem}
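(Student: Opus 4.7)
The plan is to construct a natural isomorphism between the functor $A \mapsto \iso \sC(\sEnd_0(L_\rho); A)$ and the functor $A \mapsto \homo(R, A)$ pro-represented by $R$. By the universal property of the representation scheme, $\homo(R, A)$ is canonically the set of \emph{framed} deformations of $\rho$ to $A$, that is, group homomorphisms $\tilde\rho : \pi_1(X) \to GL(n, A)$ whose reduction modulo the maximal ideal $m \subset A$ equals $\rho$.

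For the forward direction, given a Maurer-Cartan element $\omega$ in $\sEnd_0(L_\rho) \otimes m$, I view it as a perturbation of the flat connection $\nabla$ on $L_\rho$. Then $\nabla + \omega$ is a flat $A$-linear connection on the $C^\infty$-bundle $L_\rho \otimes_\cc A$ of free $A$-modules of rank $n$, and as such determines a local system of free $A$-modules extending $L_\rho$, equivalently a representation $\tilde\rho$ of $\pi_1(X)$ into $GL(n, A)$ reducing to $\rho$ modulo $m$. The condition $\omega \in \sEnd_0$, which means $\omega|_x = 0$, ensures that parallel transport is compatible with the chosen framing of $L_\rho|_x$, so $\tilde\rho$ is a well-defined framed deformation. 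Morphisms in $\sC(\sEnd_0(L_\rho); A)$ are $\exp(\lambda)$ with $\lambda|_x = 0$; these act as $C^\infty$ gauge transformations that are the identity on the fiber at $x$, so they do not alter the resulting framed representation, yielding a well-defined map on isomorphism classes.

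For the inverse, given a framed deformation $\tilde\rho$, the associated local system $L_{\tilde\rho}$ of free $A$-modules extending $L_\rho$ admits a $C^\infty$ trivial isomorphism to $L_\rho \otimes_\cc A$; a partition-of-unity argument lets such an isomorphism be chosen to restrict to the identity on the fiber at $x$. Transporting the flat connection on $L_{\tilde\rho}$ through this identification produces a flat connection of the form $\nabla + \omega$ with $\omega$ a Maurer-Cartan element of $\sEnd_0(L_\rho) \otimes m$. Any two framed $C^\infty$ identifications differ by an automorphism of $L_\rho \otimes A$ restricting to the identity at $x$, hence by $\exp(\lambda)$ for $\lambda$ in the degree zero part of $\sEnd_0(L_\rho) \otimes m$, so the class of $\omega$ in the groupoid $\sC(\sEnd_0(L_\rho); A)$ is well-defined.

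The main subtlety I expect is verifying that framed $C^\infty$ trivializations $L_{\tilde\rho} \cong L_\rho \otimes_\cc A$ form a torsor under the degree zero gauge group, so that the two constructions are genuinely mutually inverse on isomorphism classes and not merely after further quotienting. Functoriality in $A$ is routine, and the resulting natural isomorphism of functors establishes the representability claim, recovering the result stated in \cite[6.8]{gm}.
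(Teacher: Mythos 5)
Your argument follows essentially the same route as the paper's sketch: identify $\homo(R,A)$ with framed deformations of $\rho$ (equivalently, $A$-local systems on $X_A$ extending $L_\rho$ equipped with a basis at $x$), use the Riemann--Hilbert correspondence between Maurer--Cartan elements $\omega$ and flat $A$-connections $\nabla\otimes\id_A+\omega$, and match the gauge equivalence in $\sC(\sEnd_0(L_\rho);A)$ with isomorphism of framed local systems. The inverse direction via a framed $C^\infty$ trivialization and the torsor observation are the correct content, and you spell that out more explicitly than the paper does.

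One imprecision worth correcting, which you share with the paper's own sketch: the condition ``$\omega\in\sEnd_0$'' does \emph{not} mean $\omega|_x=0$. By the definition in the paper, the augmentation $\varepsilon_x$ is the restriction map in degree $0$ and is \emph{zero} in every other degree, so $\sEnd_0^j(L_\rho)=\sEnd^j(L_\rho)$ for $j\neq 0$. A Maurer--Cartan element lives in degree $1$, hence passing from $\sEnd(L_\rho)$ to $\sEnd_0(L_\rho)$ imposes no constraint on $\omega$ at all: the objects of $\sC(\sEnd_0(L_\rho);A)$ and $\sC(\sEnd(L_\rho);A)$ coincide. The framing is simply the given identification $(L_\rho\otimes_\cc A)|_x\cong A^n$ coming from the basis of $L_\rho|_x$, which is canonical independently of $\omega$; what makes the construction well defined on isomorphism classes is exclusively the restriction on the \emph{gauge group} in degree $0$, namely that the morphisms $\exp(\lambda)$ satisfy $\lambda|_x=0$ and hence act as the identity on the fiber at $x$. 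You state this correctly in the very next sentence, so the proof is not broken, but the sentence attributing a pointwise vanishing condition to $\omega$ itself should be removed; it plays no logical role.
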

\begin{proof}[Sketch of proof]
Since any ring homomorphism from $\sO_{\br(X, n), \rho}$ to $A$ factors through $R$, the subset of $\homo(\spec(A), \br(X, n))$ whose theoretic image is $\rho$ is functorially bijective to $\homo(R, A)$. And $\homo(\spec(A), \br(X, n))$ is functorially bijective to the isomorphism classes of $A$-local systems with a base, i.e., $(L_A, \mathfrak{b})$, where $L_A$ is a locally constant free $A$ module of rank $n$ on $X_A=X\times \spec(A)$ whose restriction to $X$ is isomorphic to $L_\rho$, and $\mathfrak{b}: L_A|_x \xrightarrow{\sim}A^n$ gives a basis at $x$. 

On the other hand, $\{\nabla\otimes \id_A+\omega\,|\, \omega\in\obj \sC(\sEnd_0(L_\rho); A)\}$ is the set of flat connections on the underlining $C^\infty$ vector bundle of $L_\rho \otimes_\cc A$ whose restriction to $L_\rho$ is $\nabla$ and which is equal to $\nabla\otimes \id_A$ at $x$.
Thus, via Riemann-Hilbert correspondence, each $\omega\in \obj \sC(\sEnd_0(L_\rho); A)$ gives a $A$-local system on $X_A$. Since $\omega|_x=0$, there is a base at $x$ canonically attached to the $A$-local system. Furthermore, the flat bundles given by $\omega, \omega'\in \obj \sC(\sEnd_0(L_\rho); A)$ are isomorphic to each other if and only if $\omega$ and $\omega'$ differ by a Gauge transformation, that is they are isomorphic to each other in $\sC(\sEnd_0(L_\rho); A)$. Therefore, $\homo(R, A)$ is functorially bijective to $\iso \sC(\sEnd_0(L_\rho); A)$.
\end{proof}

Restricting to $x$ gives an embedding of vector spaces $H^0(X, \enmo(L_\rho))\hookrightarrow \enmo(L_\rho)|_x$. We denote the quotient $(\enmo(L_\rho)|_x)/H^0(X, \enmo(L_\rho))$ by $V$, which can also be considered as an affine variety with a distinguished origin $0\in V$. 

Recall that $H^1(X, End(L_\rho))\cong Z^1(\pi_1(X), \mathfrak{gl}(n, \mathbb{C})_{\textrm{Ad}\rho})/B^1(\pi_1(X), \mathfrak{gl}(n, \mathbb{C})_{\textrm{Ad}\rho})$, and it's not hard to see that $B^1(\pi_1(X), \mathfrak{gl}(n, \mathbb{C})_{\textrm{Ad}\rho})$ is isomorphic to $V$ in a natural way. Thus, we have a short exact sequence,
$$
0\to V\to Z^1(\pi_1(X), \mathfrak{gl}(n, \mathbb{C})_{\textrm{Ad}\rho})\to H^1(X, End(L_\rho))\to 0.
$$
In the introduction, we have defined $C\subset Z^1(\pi_1(X), \mathfrak{gl}(n, \mathbb{C})_{\textrm{Ad}\rho})$ to be the kernel of the quadratic map $Z^1(\pi_1(X), \mathfrak{gl}(n, \mathbb{C})_{\textrm{Ad}\rho})\to H^2(X, \enmo(E))$ given by $\eta\mapsto \bar\eta\wedge\bar\eta$, where $\bar\eta$ is the image of $\eta$ in $H^1(X, \enmo(L_\rho))$ under the above homomorphism. Denote the image of $C$ under the projection $\pi: Z^1(\pi_1(X), \mathfrak{gl}(n, \mathbb{C})_{\textrm{Ad}\rho})\to H^1(X, End(L_\rho))$ by $\bar{C}$. Then $\bar C$ is the kernel of $H^1(X, End(L_\rho))\to H^2(X, End(L_\rho))$ defined by $\bar\eta\mapsto \bar\eta\wedge\bar\eta$.  Thus, $C\cong \bar{C}\times V$ in a non-canonical way. 

When $X$ is a compact K\"ahler manifold and $\rho$ is a semi-simple representation, the monodromy representation of the local system $\enmo(L_\rho)$ is also semi-simple. Corlette's construction in \cite{c} gives a harmonic metric on $\enmo(L_\rho)$. Using this harmonic metric, Simpson proved that $\sEnd(L_\rho)$ is formal. Then by analyzing the homomorphism $\sEnd_0(L_\rho)\hookrightarrow \sEnd(L_\rho)$, and applying Theorem \ref{deligne}, Goldman and Millson also showed the following. 

\begin{theorem}[Goldman-Millson, Simpson]\label{gm2}
Suppose $X$ is a compact K\"ahler manifold, and $\rho$ is a semi-simple representation. Denote the complete ring $(\sO_{C})_{(0)}$ by $R'$. Then the functor $A\mapsto \iso \sC(\sEnd_0(L_\rho); A)$ is represented by $R'$. 
\end{theorem}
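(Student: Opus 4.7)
By Theorem \ref{gm1} it suffices to exhibit, for every Artinian local $\cc$-algebra $A$, a natural bijection $\iso \sC(\sEnd_0(L_\rho); A) \cong \homo(R', A)$. The plan is to pass through the cohomology DGLA of $\sEnd_0(L_\rho)$ using Theorem \ref{deligne}: after (i) identifying this cohomology DGLA, (ii) proving that $\sEnd_0(L_\rho)$ is formal, and (iii) computing the deformation groupoid of the cohomology DGLA explicitly, the three bijections compose to the desired one.

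For (i), apply the long exact sequence to the short exact sequence of DGLAs $0 \to \sEnd_0(L_\rho) \to \sEnd(L_\rho) \xrightarrow{\varepsilon_x} \enmo(L_\rho)|_x \to 0$. For semisimple $\rho$ the evaluation $H^0(\sEnd(L_\rho)) \to \enmo(L_\rho)|_x$ of parallel endomorphisms at the basepoint is injective, hence $H^0(\sEnd_0(L_\rho)) = 0$, $H^i(\sEnd_0(L_\rho)) \cong H^i(X, \enmo(L_\rho))$ for $i \geq 2$, and $H^1(\sEnd_0(L_\rho))$ fits in the extension $0 \to V \to H^1(\sEnd_0(L_\rho)) \to H^1(X, \enmo(L_\rho)) \to 0$ that the introduction identifies with $Z^1(\pi_1(X), \mathfrak{gl}(n,\cc)_{\ad \rho})$. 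The induced bracket restricts to the wedge bracket on $H^{\geq 1}(X, \enmo(L_\rho))$, while the $V$-piece brackets trivially into $H^2$: for $\alpha, \beta \in \mathcal{E}nd^0(L_\rho)$ the Leibniz identities $[d\alpha, d\beta] = d[\alpha, d\beta]$ and $[d\alpha, \omega] = d[\alpha, \omega]$ (for any closed 1-form $\omega$) exhibit these brackets as exact.

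For (ii), Simpson's theorem, which combines Corlette's harmonic metric with the principle of two types for harmonic bundles, supplies a zigzag of DGLA quasi-isomorphisms $\sEnd(L_\rho) \simeq H^\bullet(\sEnd(L_\rho))$. I extend this zigzag to the augmentation ideal by forming, at each intermediate DGLA, the kernel of the evaluation map to $\enmo(L_\rho)|_x$, and checking (using the cohomology computation of (i)) that the induced maps on these kernels remain quasi-isomorphisms. The upshot is a formality zigzag $\sEnd_0(L_\rho) \simeq H^\bullet(\sEnd_0(L_\rho))$. This is the main obstacle of the proof: Simpson's formality is a statement about $\sEnd(L_\rho)$ alone, and there is no abstract reason why restricting to the augmentation ideal should preserve quasi-isomorphisms, so one has to verify compatibility concretely at each stage of the zigzag, using the harmonic or $\partial\bar\partial$ models together with the explicit description from step (i).

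For (iii), set $K = H^\bullet(\sEnd_0(L_\rho))$. Since $K^0 = 0$ the groupoid $\sC(K; A)$ has trivial gauge group, so $\iso \sC(K; A) = \{\omega \in K^1 \otimes m : [\omega, \omega] = 0\}$. Choosing a splitting $K^1 = V \oplus H^1(X, \enmo(L_\rho))$ and decomposing $\omega = \omega_V + \omega_H$ accordingly, the bracket computation from (i) reduces the Maurer--Cartan equation to $[\omega_H, \omega_H] = 0$ in $H^2(X, \enmo(L_\rho)) \otimes m$, which is exactly the equation cutting out the cone $\bar C$, and hence, with $\omega_V \in V \otimes m$ unconstrained, the cone $C \cong V \times \bar C$. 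Therefore $\iso \sC(K; A) = \homo((\sO_C)_{(0)}, A) = \homo(R', A)$, and combining this with Theorem \ref{deligne} applied to the zigzag of (ii) and with Theorem \ref{gm1} completes the proof.
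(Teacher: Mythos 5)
Steps (i) and (iii) of your plan are sound, but step (ii) --- the only nontrivial step --- contains a genuine gap, and the naive ``take kernels of the evaluation map along the zigzag'' procedure cannot be made to work.

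Concretely, write the zigzag from Proposition \ref{formal} (restricted to the DGLA factor) as
$\sEnd(L_\rho) \hookleftarrow \mathcal{KE}nd^\bullet(L_\rho) \twoheadrightarrow H^\bullet(\sEnd(L_\rho))$.
The degree-zero part of the intermediate DGLA $\mathcal{KE}nd^\bullet(L_\rho)=\ker(\nabla^{(1,0)})$ already collapses to the \emph{parallel} sections $H^0(X,\enmo(L_\rho))$ (on a compact K\"ahler manifold a smooth section annihilated by one type-component of a flat connection with harmonic metric is automatically harmonic, hence flat), and the degree-zero part of $H^\bullet(\sEnd(L_\rho))$ is likewise $H^0(X,\enmo(L_\rho))$. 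In both cases the evaluation map to $\enmo(L_\rho)|_x$ is \emph{injective}, not surjective, so the kernels in degree zero are trivial. Taking kernels therefore produces complexes whose $H^1$ is $H^1(X,\enmo(L_\rho))$, whereas by your own computation in step (i) one has $H^1(\sEnd_0(L_\rho))\cong Z^1(\pi_1(X),\mathfrak{gl}(n,\cc)_{\ad\rho})$, which contains the extra summand $V=B^1(\pi_1(X),\mathfrak{gl}(n,\cc)_{\ad\rho})$. So the inclusion $\ker\bigl(\epsilon_x|_{\mathcal{KE}nd^\bullet}\bigr)\hookrightarrow\sEnd_0(L_\rho)$ is not surjective on $H^1$, and in fact no zigzag of kernels connecting $\sEnd_0(L_\rho)$ to $\ker\bigl(\epsilon_x|_{H^\bullet(\sEnd(L_\rho))}\bigr)$ can consist of quasi-isomorphisms, since the two endpoints have non-isomorphic $H^1$. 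A secondary problem is that, even setting the dimension count aside, $\ker\bigl(\epsilon_x|_{H^\bullet(\sEnd(L_\rho))}\bigr)$ is simply a different graded Lie algebra than $H^\bullet(\sEnd_0(L_\rho))$, so the zigzag you describe would not end at the object you need for step (iii). The sentence ``checking $\dots$ that the induced maps on these kernels remain quasi-isomorphisms'' is precisely where the proof breaks: that check fails.

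The route the paper follows (citing Goldman--Millson) is genuinely different: one does \emph{not} try to establish formality of the augmentation ideal $\sEnd_0(L_\rho)$. Instead one works with the ambient formal DGLA $\sEnd(L_\rho)$ \emph{together with} the augmentation $\varepsilon_x$ as extra structure, and uses the augmented (or ``based'') version of the Deligne--Schlessinger--Stasheff equivalence: a DGLA quasi-isomorphism of the ambient algebras that is compatible with the augmentations to $\enmo(L_\rho)|_x$ induces an equivalence of the \emph{framed} deformation groupoids $\sC(\sEnd_0(\cdot);A)$, even though the induced map on augmentation ideals is not itself a quasi-isomorphism. The extra summand $V$ then re-enters not through a kernel but through the fibre of the forgetful map from framed to unframed deformations, and the explicit computation of the Maurer--Cartan locus for the augmented cohomology DGLA produces exactly the cone $C\cong V\times\bar C$. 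Your step (iii) is the right endgame, but it has to be reached by this augmented comparison, not by a formality statement for $\sEnd_0(L_\rho)$ alone.
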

An immediate consequence of Theorem \ref{gm1} and Theorem \ref{gm2} is the following.
\begin{cor}\label{gmcor}
The formal scheme of $\br(X, n)$ at $\rho$ is isomorphic to the formal scheme of $C$ at the origin. 
\end{cor}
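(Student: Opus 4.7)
The plan is to deduce the corollary directly from Theorems \ref{gm1} and \ref{gm2} by Yoneda's lemma. A formal scheme at a closed point of a finite type $\cc$-scheme is determined up to isomorphism by the functor it represents on the category of Artinian local $\cc$-algebras of finite type: the formal scheme $\br(X,n)_{(\rho)}$ corresponds to $A\mapsto\homo_\cc(R,A)$ with $R=(\sO_{\br(X,n)})_{(\rho)}$, and $C_{(0)}$ corresponds to $A\mapsto\homo_\cc(R',A)$ with $R'=(\sO_C)_{(0)}$. Thus it suffices to exhibit a natural isomorphism of these two functors.

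This is exactly what the two preceding theorems accomplish. Theorem \ref{gm1} identifies the functor $A\mapsto\homo_\cc(R,A)$ with $A\mapsto\iso\sC(\sEnd_0(L_\rho);A)$, by interpreting points of the formal neighborhood of $\rho$ as families of flat connections on $L_\rho\otimes_\cc A$ rigidified at the basepoint modulo gauge equivalence. Theorem \ref{gm2} identifies the functor $A\mapsto\homo_\cc(R',A)$ with the \emph{same} functor $A\mapsto\iso\sC(\sEnd_0(L_\rho);A)$, using Simpson's formality of $\sEnd(L_\rho)$ for semi-simple $\rho$ on a compact K\"ahler manifold together with Theorem \ref{deligne}. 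Chaining these two prorepresentability statements gives a natural isomorphism $\homo_\cc(R,A)\cong\homo_\cc(R',A)$ of functors on Artinian local rings, and Yoneda's lemma then forces $R\cong R'$ as complete local $\cc$-algebras, i.e. $\br(X,n)_{(\rho)}\cong C_{(0)}$.

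There is essentially no obstacle here, since all the deformation-theoretic content is packaged in Theorems \ref{gm1} and \ref{gm2}. The only subtlety worth flagging—and this is exactly what the footnote to Theorem \ref{thm1} warns about—is that the isomorphism produced by this argument depends on a choice of universal element on each side; any two such choices differ by an automorphism of $C_{(0)}$. Showing that every such isomorphism matches $\sV^i_r(X,n)_{(\rho),\red}$ with $\sR^i_r(X,\rho)_{(0),\red}$ is what the rest of the section will have to do; the present corollary is only the statement at the level of ambient formal schemes.
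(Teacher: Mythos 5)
Your argument is exactly the one the paper intends when it calls the corollary ``an immediate consequence of Theorem \ref{gm1} and Theorem \ref{gm2}'': both $R$ and $R'$ prorepresent the same functor $A\mapsto\iso\sC(\sEnd_0(L_\rho);A)$, so Yoneda gives $R\cong R'$. Your remark about the non-canonicity of the resulting isomorphism correctly anticipates the footnote to Theorem \ref{thm1}.
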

For the rest of this section, we assume $X$ to be a compact K\"ahler manifold and assume $\rho$ to be a semi-simple representation. 

Given any $\eta\in Z^1(\pi_1(X), \mathfrak{gl}(n, \mathbb{C})_{\textrm{Ad}\rho})$, denote its image in $H^1(X, \enmo(E))$ by $\bar\eta$. Recall that the Aomoto complex of $\eta$ is defined to be 
$$(H^\bullet(X, L_\rho), \wedge\bar\eta)\stackrel{\textrm{def}}{=}H^0(X, L_\rho)\stackrel{\wedge \bar\eta}{\longrightarrow}H^1(X, L_\rho)\stackrel{\wedge \bar\eta}{\longrightarrow} H^2(X, L_\rho)\stackrel{\wedge \bar\eta}{\longrightarrow}\cdots.$$ 
The resonance variety is defined by 
\begin{equation}\label{resonancedef}
\sR^i_k(X, \rho)\stackrel{\textrm{def}}{=}\{\eta\in Z^1(\pi_1(X), \mathfrak{gl}(n, \mathbb{C})_{\textrm{Ad}\rho})\;| \;\dim H^i(H^\bullet(X, L_\rho), \wedge\bar\eta)\geq k\}.
\end{equation}
Notice that there is a tautological section $\zeta$ of $\sO_C\otimes_\cc H^1(X, \enmo(L_\rho))$, such that for any $\eta\in C$, $\zeta|_{\{\eta\}}=\bar\eta\in H^1(X, \enmo(L_\rho))$. Hence there is a universal Aomoto complex on $C$,
$$
(\sO_C\otimes_\cc H^\bullet(X, L_\rho), \wedge\zeta)=\sO_C\otimes_\cc  H^0(X, L_\rho)\xrightarrow{\wedge\zeta}\sO_C\otimes_\cc  H^1(X, L_\rho)\xrightarrow{\wedge\zeta}\cdots
$$
Now we give a precise definition of the resonance varieties as subschemes. 

\begin{defn}\label{resonance}
For any $i, r\in \zz_{\geq 0}$, $\sR^i_r(X, \rho)$ is defined to be the subscheme associated to the fitting ideal $I_{k_i-r+1}((\wedge\zeta)_{i-1}\oplus (\wedge\zeta)_i)$, where $k_i=\dim H^i(X, L_\rho)$, and $(\wedge\zeta)_{i-1}: H^{i-1}(X, L_\rho)\to H^i(X, L_\rho)$, $(\wedge\zeta)_{i}: H^{i}(X, L_\rho)\to H^{i+1}(X, L_\rho)$ are the differentials in the above complex. 
\end{defn}
Then one can easily check that the closed points of $\sR^i_r(X, \rho)$ satisfies (\ref{resonancedef}). 

Recall that $\bar{C}$ is the image of $C$ under the projection $\pi: Z^1(\pi_1(X), \mathfrak{gl}(n, \mathbb{C})_{\textrm{Ad}\rho})\to H^1(X, End(L_\rho))$. Let $\bar\sR^i_r(X, \rho)$ be the image of $\sR^i_r(X, \rho)$ under the same projection $\pi$. On $\bar C$, there is a similar tautological section $\bar\zeta$ of $\sO_{\bar C}\otimes_\cc H^1(X, \enmo(L_\rho))$, such that at any $\bar\eta\in \bar C$, $\bar\zeta|_{\bar\eta}=\bar\eta\in H^1(X, \enmo(L_\rho))$. Similarly, there is also a universal Aomoto complex on $\bar C$, 
$$
(\sO_{\bar C}\otimes_\cc H^\bullet(X, L_\rho), \wedge\bar\zeta)=\sO_{\bar C}\otimes_\cc  H^0(X, L_\rho)\xrightarrow{\wedge\bar\zeta}\sO_{\bar C}\otimes_\cc  H^1(X, L_\rho)\xrightarrow{\wedge\bar\zeta}\cdots
$$
Then, it is easy to see that $\bar\sR^i_r(X, \rho)$ is also defined by the determinantal ideals $I_{k_i-r+1}((\wedge\bar\zeta)_{i-1}\oplus (\wedge\bar\zeta)_i)$. Moreover, $\sR^i_r(X, \rho)=\pi^{-1}(\bar\sR^i_r(X, \rho))$. Recall that the isomorphism $C\cong \bar{C}\times V$ is not canonical. In other words, the isomorphism depends on a choice. However, by the previous argument, for any such choice the isomorphism maps $\sR^i_r(X, \rho)$ to $\bar\sR^i_r(X, \rho)\times V$.

Before proving Theorem \ref{thm1} and Theorem \ref{thm2}, we first show the formality result, which is essentially \cite[Lemma 2.2]{s1}.
\begin{prop}\label{formal}
The DGLA pair $(\sEnd(L_\rho), \odr(L_\rho))$ is formal. 
\end{prop}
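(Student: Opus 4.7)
The plan is to build a zigzag of homotopy equivalences of DGLA pairs joining $(\sEnd(L_\rho), \odr(L_\rho))$ to its cohomology pair. The key tool is Simpson's ``principle of two types'' for harmonic bundles, which plays the role of the $\partial\bar\partial$-lemma in the classical Deligne--Griffiths--Morgan--Sullivan proof of formality for compact K\"ahler manifolds. In the rank one setting over a trivial local system, this reduces exactly to the DGMS argument; the point of working with the pair is to run the same zigzag simultaneously on the algebra and on the module.

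First, since $\rho$ is semi-simple, Corlette's theorem furnishes a harmonic metric $h$ on $L_\rho$, which induces a harmonic metric on $\enmo(L_\rho) = L_\rho\otimes L_\rho^*$. Following \cite{s1}, use $h$ to decompose the flat connection on both $L_\rho$ and $\enmo(L_\rho)$ as $\nabla = D' + D''$, and set $D^c = i(D'' - D')$. The Simpson--K\"ahler identities for harmonic bundles yield $\nabla^2 = (D^c)^2 = 0$ and $\nabla D^c + D^c\nabla = 0$. Because the metric on $\enmo(L_\rho)$ is induced from that on $L_\rho$, the operator $D^c$ satisfies the graded Leibniz rule with respect to both the Lie bracket on $\sEnd(L_\rho)$ and the module action of $\sEnd(L_\rho)$ on $\odr(L_\rho)$.

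Next, form the sub DGLA pair $(\kn D^c \cap \sEnd(L_\rho),\ \kn D^c \cap \odr(L_\rho))$ equipped with the differential $\nabla$; the Leibniz rules ensure this is a sub DGLA pair. Since $\nabla$ and $D^c$ anti-commute, both $\kn D^c$ and $\im D^c$ are $\nabla$-stable, so the quotient by $\im D^c$ is well defined, and on it $\nabla$ acts as zero by the principle of two types. This gives the zigzag of DGLA-pair morphisms
\[
(\sEnd(L_\rho), \odr(L_\rho))\ \hookleftarrow\ (\kn D^c,\ \kn D^c)\ \twoheadrightarrow\ (\kn D^c/\im D^c,\ \kn D^c/\im D^c).
\]
Both arrows are quasi-isomorphisms on each component by the principle of two types: any $\nabla$-closed class has a $D^c$-closed representative, and any $D^c$-closed, $\nabla$-exact form is $\nabla D^c$-exact (and symmetrically with the roles of $\nabla$ and $D^c$ swapped). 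Finally, identify the right-hand pair with $(H^\bullet(\sEnd(L_\rho)),\ H^\bullet(\odr(L_\rho)))$ with zero differential, via the map sending the class of a bi-closed representative to its $\nabla$-cohomology class; this map respects the induced Lie bracket and module action because these are induced pointwise from the structures on the ambient forms.

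The main obstacle will be verifying that every step carries over compatibly to the module $\odr(L_\rho)$ and not just to the algebra $\sEnd(L_\rho)$, i.e.\ that the graded Leibniz rule for $D^c$, the anti-commutation of $\nabla$ and $D^c$, and the principle of two types all respect the evaluation pairing $\enmo(L_\rho)\otimes L_\rho\to L_\rho$. This compatibility is essentially formal once one unwinds that the $D'$, $D''$ operators on $\enmo(L_\rho)$ are the ones induced from $L_\rho$ by the Leibniz rule through the identification $\enmo(L_\rho)\cong L_\rho\otimes L_\rho^*$, so the pair-formality reduces to the algebra-formality proved in \cite[Lemma 2.2]{s1} applied in parallel to the DGLA and to the module.
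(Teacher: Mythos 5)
Your proof is correct and matches the paper's strategy in all essential respects: both start from Corlette's harmonic metric on $L_\rho$ and $\enmo(L_\rho)$, invoke Simpson's K\"ahler-type identities for harmonic bundles as the stand-in for the $\partial\bar\partial$-lemma, and run a DGMS-style zigzag of quasi-isomorphisms of DGLA pairs. The only divergence is the choice of intermediate sub-pair: you take $\kn D^c$ with the original differential $\nabla$ and project onto $\kn D^c/\im D^c$ (the classical DGMS formulation), whereas the paper uses the kernel of $\nabla^{(1,0)}$ (Simpson's $D'$), equipped with the differential $\nabla^{(0,1)}=D''$, and maps onto the cohomology pair. These are interchangeable versions of the same two-step zigzag, both justified by the $\nabla D^c$/two-types lemma, so nothing is gained or lost by the choice. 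One point where your write-up is more explicit than the paper's is in checking that $D^c$ is simultaneously a derivation for the Lie bracket on $\sEnd(L_\rho)$ and for the module action on $\odr(L_\rho)$, and that the two-types argument transports along both; the paper compresses this to the observation that the harmonic metrics on $\enmo(L_\rho)$ and $L_\rho$ give harmonic representatives for every DGLA and module in the zigzag, leaving the Leibniz-compatibility of the inclusion and projection implicit.
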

\begin{proof}
The differentials $\nabla: \mathcal End^j(L_\rho)\to \mathcal End^{j+1}(L_\rho)$ and $\nabla: \Omega_{DR}^j(L_\rho)\to \Omega_{DR}^{j+1}(L_\rho)$ divides into $(1,0)$ and $(0,1)$ components $\nabla=\nabla^{(0,1)}+\nabla^{(1,0)}$. Denote the kernels of $\nabla^{(1,0)}$ by $\mathcal{KE}nd^j(L_\rho)$ and $\sK\Omega_{DR}^j(L_\rho)$ respectively. Then $(\mathcal{KE}nd^\bullet(L_\rho), \nabla^{(0, 1)})$ is a DGLA, and $(\sK\odr(L_\rho), \nabla^{(0, 1)})$ is a $(\mathcal{KE}nd^\bullet(L_\rho), \nabla^{(0, 1)})$-module. 

By definition, there is a natural injective map of DGLA pairs,
\begin{equation}\label{map1}
(\mathcal{KE}nd^\bullet(L_\rho), \sK\odr(L_\rho))\hookrightarrow (\sEnd(L_\rho), \odr(L_\rho)).
\end{equation}
The existence of harmonic metrics on $\enmo(L_\rho)$ and $L_\rho$ implies $\partial\bar\partial$-lemma holds. Hence, for every $j$, $\kn(\nabla^{(0, 1)})\cap \mathcal{KE}nd^j(L_\rho)\subset \kn(\nabla)$ and $\im(\nabla^{(0, 1)})\cap \mathcal{KE}nd^j(L_\rho)\subset \im(\nabla)$. Therefore, there is also a surjective map between DGLA pairs,
\begin{equation}\label{map2}
(\mathcal{KE}nd^\bullet(L_\rho), \sK\odr(L_\rho))\twoheadrightarrow (H^\bullet(\sEnd(L_\rho)), H^\bullet(\odr(L_\rho))).
\end{equation}
The existence of harmonic metrics on $\enmo(L_\rho)$ and $L_\rho$ also implies that the cohomology of every DGLA or DGLA module appearing in (\ref{map1}) or (\ref{map2}) is represented by harmonic forms. Thus, (\ref{map1}) and (\ref{map2}) are both homotopy equivalence. Therefore, $(\sEnd(L_\rho), \odr(L_\rho))$ is formal. 
\end{proof}

Now, we are ready to prove Theorem \ref{thm1} and Theorem \ref{thm2}. 
\begin{proof}[Proof of Theorem \ref{thm1}]
Since we only need to worry about the reduced part of the formal scheme, it is sufficient to prove the following statement.
\begin{statement}
Let $S$ be a noetherian complete local ring over $\cc$, which is also an integral domain. Suppose $h_1: \spec(S)\to \br(X, n)$ and $h_2: \spec(S)\to C$ map the closed point in $\spec(S)$ to $\rho$ and $0$ respectively. Passing to formal schemes, we have $\hat{h}_1: \widehat{\spec(S)}\to \br(X, n)_{(\rho)}$ and $\hat{h}_2: \widehat{\spec(S)}\to C_{(0)}$. Suppose the composition of $\hat{h}_1$ and the isomorphism $\br(X, n)_{(\rho)}\cong C_{(0)}$ is $\hat{h}_2$. Then the image of $h_1$ is contained in $\sV^i_r(X, n)$ if and only if the image of $h_2$ is contained in $\sR^i_r(X, \rho)$.
\end{statement}

Denote the pull back of the universal local system $\sL$ on $X\times \br(X, n)$ via $\id_X\times h_1: X\times \spec(S)\to X\times \br(X, n)$ by $L_S$. According to the isomorphism (\ref{char}), the image of $h_1$ is contained in $\sV^i_r(X, n)$ if and only if the rank of $H^i(X_S, L_S)$ is at least $r$ as an $S$-module. In other words, $\dim_K H^i(X_S, L_S)\otimes_S K\geq r$, where $K$ is the quotient field of $S$. 

On the other hand, $\pi: Z^1(\pi_1(X), \mathfrak{gl}(n, \mathbb{C})_{\textrm{Ad}\rho})\to H^1(X, End(L_\rho))$ induces a map $\pi_{(0)}: C_{(0)}\to \bar{C}_{(0)}$. Since $\sR^i_r(X, \rho)=\pi^{-1}(\bar\sR^i_r(X, \rho))$, the image of $h_2$ is contained in $\sR^i_r(X, \rho)$ if and only if the image of $\pi_{(0)}\circ \hat{h}_2$ is contained in $\bar\sR^i_r(X, \rho)_{(0)}$. Denote $\tilde{h}_2=\pi_{(0)}\circ \hat{h}_2: \spec(S)\to \bar{C}_{(0)}$. Then by the definition of $\bar\sR^i_r(X, \rho)$, the image of $\tilde{h}_2$ is contained in $\bar\sR^i_r(X, \rho)_{(0)}$ if and only if the global sections of $H^i((\sO_{\bar{C}}\otimes_\cc H^\bullet(X, L_\rho), \wedge\bar\zeta)\otimes_{\sO_{\bar{C}}} S)$ has rank at least $r$ as an $S$-module. Here we consider $S$ as an $\sO_{\bar{C}}$-module by composing $h_2: \spec(S)\to C$ with the projection $C\to \bar{C}$. Then the complex $(\sO_{\bar{C}}\otimes_\cc H^\bullet(X, L_\rho), \wedge\bar\zeta)\otimes_{\sO_{\bar{C}}} S$ is the pull-back of the universal Aomoto complex on $\bar{C}$ to $\spec(S)$. 

From now on, we denote $(\sO_{\bar{C}}\otimes_\cc H^\bullet(X, L_\rho), \wedge\bar\zeta)\otimes_{\sO_{\bar{C}}} S$ by $(S\otimes H^\bullet(X, L_\rho), \wedge\bar\zeta_S)$. And by abusing notations, we also write $H^i(S\otimes H^\bullet(X, L_\rho), \wedge\bar\zeta_S)$ for its global sections. Thus, to show the statement, it is enough to show the following isomorphism,
\begin{equation}\label{iso3}
H^i(X_S, L_S)\cong H^i(S\otimes H^\bullet(X, L_\rho), \wedge\bar\zeta_S).
\end{equation}

Denote the maximal ideal of $S$ by $m_S$, and denote the quotient $S/(m_S)^k$ by $S_k$. Then each $S_n$ is an Artinian local ring. $S$ is complete, and hence $S=\varprojlim\limits_{k}S_k$. Let $L_{S_k}=L_S\otimes_{S} S_k$ and $\bar\zeta_{S_k}=\bar\zeta_{S}\otimes_{S} S_k$. Since Mittag-Leffler condition is automatic for artinian modules, we have the following isomorphisms,
$$
H^i(X_S, L_S)\cong \varprojlim_{n}H^i(X_{S}, L_{S_k})
$$
and
$$
H^i(S\otimes H^\bullet(X, L_\rho), \wedge\bar\zeta_S)\cong \varprojlim_{n} H^i(S_k\otimes H^\bullet(X, L_\rho), \wedge\bar\zeta_{S_k}).
$$
Therefore, to show isomorphism (\ref{iso3}) is reduced to show the existence of diagram
\begin{equation}\label{diagram}
\begin{gathered}
\xymatrix{
H^i(X_{S}, L_{S_{k+1}})\ar[d]\ar[r]&H^i(S_{k+1}\otimes H^\bullet(X, L_\rho), \wedge\bar\zeta_{S_{k+1}})\ar[d]\\
H^i(X_{S}, L_{S_{k}})\ar[r]&H^i(S_k\otimes H^\bullet(X, L_\rho), \wedge\bar\zeta_{S_k})
}
\end{gathered}
\end{equation}
with both horizontal arrows being isomorphisms. 

In Theorem \ref{gm1}, we denoted the complete ring $(\sO_{\br(X, n)})_{(\rho)}$ by $R$. Since $S$ is complete, the map $h_1: \spec(S)\to \br(X, n)$ induces a ring homomorphism $h_1^*: R\to S$. Thus composing with $S\to S_k$, we have $(h_1^*)_k: R\to S_k$. Since $R$ represents the functor $A\mapsto \iso \sC(\sEnd_0(L_\rho); A)$, we can choose $\omega_k\in \obj \sC(\sEnd_0(L_\rho); S_k)$ such that the isomorphism class $\bar\omega_k$ of $\omega_k$ corresponds to the map $(h_1^*)_k: R\to S_k$. Furthermore, we can choose $\{\omega_k\}_{k\in\nn}$ in a compatible way, such that under the functor $\sC(\sEnd_0(L_\rho); S_{k+1})\to \sC(\sEnd_0(L_\rho); S_{k+1})$ induced by the natural projection $S_{k+1}\to S_k$, the image of $\omega_{k+1}$ is $\omega_k$. 

By the choice of $\omega_k$, one can easily check that $(\odr(L_\rho)\otimes S_k, \nabla\otimes \id_{S_k}+\omega_k)$ is the de Rham resolution of the local system $L_{S_k}$ on $X$. Here, $L_{S_k}$ is a locally constant sheaf on $X_{S_k}$, and hence a locally constant $S_k$-module over $X$. Therefore, 
$$
\hh^i_{\odr(L_\rho)}(\omega_k)\cong H^i(X, L_{S_k}).
$$
Notice when we compute $\hh^i_{\odr(L_\rho)}(\omega_k)$, it doesn't matter whether we consider $(\odr(L_\rho), \nabla)$ as a module over $\sEnd(L_\rho)$ or $\sEnd_0(L_\rho)$. Since $X\times \spec(S)\to X$ is affine, using Leray spectral sequence, we immediately have $H^i(X, L_{S_k})\cong H^i(X_{S}, L_{S_k})$. Therefore, 
\begin{equation}\label{iso4}
\hh^i_{\odr(L_\rho)}(\omega_k)\cong H^i(X_{S}, L_{S_k})
\end{equation}
We have chosen $\omega_k$ in the compatible way. Thus the isomorphism (\ref{iso4}) is functorial on $n$, that is we have the following commutative diagram,
\begin{equation}\label{diagram1}
\begin{gathered}
\xymatrix{
\hh^i_{\odr(L_\rho)}(\omega_{k+1})\ar[r]\ar[d]&H^i(X_{S}, L_{S_{k+1}})\ar[d]\\
\hh^i_{\odr(L_\rho)}(\omega_{k})\ar[r]&H^i(X_S, L_{S_{k}})
}
\end{gathered}
\end{equation}
where the horizontal arrows are isomorphism (\ref{iso4}), and the vertical arrows are induced from the natural projection $S_{k+1}\to S_k$. 

On the other hand, denote the complete ring $(\sO_{\bar{C}})_{(0)}$ by $R''$. Then $\hat{h}_2: \spec(S)\to \bar{C}_{(0)}$ induces $\hat{h}_2^{*}: R''\to S$. Composing with the natural projection $S\to S_k$, we have $(\hat{h}_2^{*})_k: R''\to S_k$. Recall that $(H^\bullet(\sEnd(L_\rho)), H^\bullet(\odr(L_\rho)))$ is the cohomology DGLA of the DGLA $(\sEnd(L_\rho), \odr(L_\rho))$. According to Definition \ref{cat}, the functor $A\mapsto \iso \sC(H^\bullet(\sEnd(L_\rho)); A)$ from the category of Artinian local rings over $\cc$ to the category of sets is obviously represented by $R''$. Choose $\sigma_k\in \obj\sC(H^\bullet(\sEnd(L_\rho)); S_k)$, whose isomorphism class $\bar\sigma_k$ corresponds to the ring homomorphism $(\hat{h}_2^{*})_k: R''\to S_k$. Then by the definition of the universal Aomoto complex $(\sO_{\bar C}\otimes_\cc H^\bullet(X, L_\rho), \wedge\bar\zeta)$, it is not hard to see that the complex $(H^\bullet(\odr(L_\rho))\otimes S_k, \sigma_k)$ is same as the global sections of the complex $H^i(S_k\otimes H^\bullet(X, L_\rho), \wedge\bar\zeta_{S_k})$. Thus, we have
\begin{equation}\label{iso5}
\hh^i_{H^\bullet(\odr(L_\rho))}(\sigma_k)\cong H^i(S_k\otimes H^\bullet(X, L_\rho), \wedge\bar\zeta_{S_k})
\end{equation}
Furthermore, we can assume we have chosen $\{\sigma_k\}_{k\in \nn}$ in a compatible way as before. Then we have the following commutative diagram
\begin{equation}\label{diagram2}
\begin{gathered}
\xymatrix{
\hh^i_{H^\bullet(\odr(L_\rho))}(\sigma_{k+1})\ar[r]\ar[d]&H^i(S_{k+1}\otimes H^\bullet(X, L_\rho), \wedge\bar\zeta_{S_{k+1}})\ar[d]\\
\hh^i_{H^\bullet(\odr(L_\rho))}(\sigma_k)\ar[r]&H^i(S_k\otimes H^\bullet(X, L_\rho), \wedge\bar\zeta_{S_k})
}
\end{gathered}
\end{equation}
where the horizontal arrows are isomorphism (\ref{iso5}), and the vertical arrows are induced by the natural projection $S_{k+1}\to S_k$. 

Now, thanks to Proposition \ref{homequ} and Proposition \ref{formal}, $\hh^i_{\odr(L_\rho)}(\omega_k)\cong \hh^i_{H^\bullet(\odr(L_\rho))}(\sigma_k)$, and this isomorphism is functorial. Hence we have a commutative diagram
\begin{equation}\label{diagram3}
\begin{gathered}
\xymatrix{
\hh^i_{\odr(L_\rho)}(\omega_{k+1})\ar[d]\ar[r] &\hh^i_{H^\bullet(\odr(L_\rho))}(\sigma_{k+1})\ar[d]\\
\hh^i_{\odr(L_\rho)}(\omega_{k})\ar[r] &\hh^i_{H^\bullet(\odr(L_\rho))}(\sigma_k)
}
\end{gathered}
\end{equation}
with horizontal arrows being isomorphisms. 

After all, (\ref{diagram}) follows from (\ref{diagram1}), (\ref{diagram2}) and (\ref{diagram3}). Thus we finished the proof of Theorem \ref{thm1}. 
\end{proof}

\begin{proof}[Proof of Theorem \ref{thm2}]
When $\dim H^i(X, L_\rho)=r$, it is clear that $\sR^i_r(X, \rho)$ is the intersection of $C$ and a linear subspace $H$ of $Z^1(\pi_1(X), \mathfrak{gl}(n, \cc)_{\ad \rho})$. $H$ contains all $\eta\in Z^1(\pi_1(X), \mathfrak{gl}(n, \cc)_{\ad \rho})$ satisfying both $\wedge\bar\eta: H^{i-1}(X, L_\rho)\to H^i(X, L_\rho)$ and $\wedge\bar\eta: H^i(X, L_\rho)\to H^{i+1}(X, L_\rho)$ are zero maps, where $\bar\eta$ is the image of $\eta$ in $H^1(X, \enmo(L_\rho))$. 

Let $A$ be an Artinian local ring over $\cc$. Let $g_1: \spec(A)\to \br(X, n)_{(\rho)}$, and let $g_2: \spec(A)\to C_{(0)}$ such that he composition of $g_1$ and $\br(X, n)_{(\rho)}\cong C_{(0)}$. It follows from \cite[Proposition 2.1]{w} (also, cf. Corollary 2.4 of \cite{w}) that the schematic image of $g_1$ is contained in $\sV^i_r(X, n)$ if and only if $H^i(X, L_A)$ is a free $A$-module of rank $r$. And it is obvious that the schematic image of $g_2$ is contained in $\sR^i_r(X, n)$ if and only if $H^i(A\otimes H^\bullet(X, L_\rho), \wedge\bar\zeta_A)$ is a free $A$-module of rank $r$. The proof of Theorem \ref{thm1} shows that 
$$H^i(X, L_A)\cong H^i(A\otimes H^\bullet(X, L_\rho), \wedge\bar\zeta_A). $$
Therefore, the schematic image of $g_1$ is contained in $\sV^i_r(X, n)$ if and only if the schematic image of $g_2$ is contained in $\sR^i_r(X, n)$. Thus we have proved Theorem \ref{thm2}. 
\end{proof}

\section{Cohomology jump loci in the moduli space of stable vector bundles}
The purpose of this section is to partially generalize the results in \cite{w} to any Hodge spectrum. The original method of \cite{w} doesn't work in this situation, because the wedge of a harmonic $(0,1)$-form and a harmonic $(p, q)$-form may not be harmonic anymore. Conversely, to conclude the whole result of \cite{w}, one has to argue that the description of the analytic germ of the moduli space of vector bundles according to Nadel \cite{n} and the one according to Goldman-Millson \cite{gm} coincide. I precise proof may require going through most of the calculation in \cite{w}. 

First, we review and extend the definition of cohomology jump loci in the moduli space of stable vector bundles, as in \cite{w}. Let $X$ be a compact K\"ahler manifold. Denote the moduli space of stable rank $r$ vector bundles with vanishing chern classes by $\sM(X, n)$. One can find a covering $\{U_\lambda\}_{\lambda\in \Lambda}$ of $\sM$, such that on each $U_\lambda$, there exist a Kuranishi family of vector bundle $\sE_\lambda$, or in other words, a complete family of vector bundles. Since we are only interested in the local properties of $\sM(X, r)$ and the cohomology jump loci, we will fix and only work on one of those $U_\lambda$. Denote $U_\lambda$, $\sE_\lambda$ by $U$, $\sE$ respectively, and the structure sheaf of $U$ by $\sO_U$. 

It is a direct consequence of the Grauert's direct image theorem and Mumford's construction that, after possibly passing to a smaller neighborhood, there exists a right bounded complex of free sheaves $F^\bullet$ on $U$ computing the cohomology of $\sE\otimes \Omega^p_{X\times U/U}$. More precisely, denote the projection from $X\otimes U$ to the first and second factor by $p_1$ and $p_2$ respectively. Then, for any $\sO_U$-module $G$ on $U$, 
\begin{equation}\label{compute}
\mathbf{R}p^q_{2*}(\sE\otimes\Omega^p_{X\times U/U}\otimes p_2^* G)\cong H^q(F^\bullet\otimes G).
\end{equation}

The locally defining ideal of $\sW^{pq}_r(X, n)$ is the determinantal ideal $I_{l_q-r+1}(d^{q-1}\oplus d^{q})$, where $l_q$ is the rank of $F^q$, and  $d^{q-1}: F^{q-1}\to F^q$, $d^q: F^q\to F^{q+1}$ are the differentials in $F^\bullet$. In fact, this is the definition in \cite{dp}, which may be different from the one in \cite{w}. However, they define the same closed points, and they are same along the locally closed loci $\sW^{pq}_r\setminus \sW^{pq}_{r+1}$. By putting $G=\cc_{u}$, for a closed point $u\in U$, (\ref{compute}) implies that set-theoretically $\sW^{pq}_r=\{E\in\sM(X, n)\;|\; \dim H^q(X, E\otimes_{\sO_X} \Omega^p_X)\geq r\}$. 

Let $E$ be a  rank $n$ stable vector bundle with vanishing chern classes. By abusing notation, we also denote its corresponding point in $\sM(X, n)$ by $E$. Denote the Dolbeault complex of $\enmo(E)$ by $(\sEndp(E), \bar\partial)$. According to \cite[9.4]{gm}, the deformation theory of $\sM(X, n)$ at $E$ is controlled by the DGLA $(\sEndp(E), \bar\partial)$. The following two theorems are indeed parallel to Theorem \ref{gm1} and Theorem \ref{gm2}. 

\begin{theorem}[Goldman-Millson]\label{gm3}
Denote the complete ring $(\sO_U)_{(E)}$ by $R$. Then $R$ represents the functor $A\mapsto \iso \sC(\sEndp(E); A)$.
\end{theorem}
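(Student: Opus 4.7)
The plan is to mirror the proof sketch of Theorem \ref{gm1} in the holomorphic-vector-bundle setting, using as the key input the bijection already recalled in the example after Theorem \ref{deligne}. First I would set up the standard representability reduction: because $R=(\sO_U)_{(E)}$ is the completion of the local ring at $E$, any ring homomorphism $\sO_{U,E}\to A$ into an Artinian local ring $A$ factors through $R$. Hence the set of morphisms $\spec(A)\to U$ with set-theoretic image $\{E\}$ is functorially in bijection with $\homo(R,A)$. Since $\sE$ is a Kuranishi (i.e.\ complete and versal in a suitably local sense) family of stable bundles on $U$, those morphisms classify, up to canonical isomorphism, the set of isomorphism classes of locally free $\sO_X\otimes_\cc A$-modules $E_A$ on $X_A=X\times \spec(A)$ such that $E_A\otimes_A \cc \cong E$. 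Stability of $E$ (hence $\enom(E)\cong \cc$ as global endomorphisms modulo scalars, or rather that all automorphisms of $E$ are scalars) is what eliminates the extra framing ambiguity that had to be dealt with in Theorem \ref{gm1} via the basepoint $x$.

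Next I would translate these deformations into Maurer--Cartan data. On the underlying $C^\infty$ bundle of $E$, the flat connections are replaced by $\bar\partial$-operators. Given $\omega\in\obj\sC(\sEndp(E);A)$, the operator $\bar\partial\otimes \id_A+\omega$ acts on the $C^\infty$ sections of $E\otimes_\cc A$, and the Maurer--Cartan equation $\bar\partial\omega+\tfrac12[\omega,\omega]=0$ is equivalent to $(\bar\partial\otimes\id_A+\omega)^2=0$. By the holomorphic Newlander--Nirenberg / Koszul--Malgrange theorem (applied fibrewise in the parameter $A$, using that $m\subset A$ is nilpotent so there is no analytic difficulty), this defines a genuine holomorphic structure on $E\otimes_\cc A$ whose reduction modulo $m$ is the original $\bar\partial_E$. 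This assignment sends objects of $\sC(\sEndp(E);A)$ to $A$-deformations of $E$.

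I would then check that two Maurer--Cartan elements $\omega,\omega'$ define isomorphic deformations if and only if they are in the same morphism class of the groupoid. Elements $\lambda\in \sEndp^0(E)\otimes m=\sEnd^{0,0}(E)\otimes m$ exponentiate (via the nilpotence of $m$) to $\sO_{X_A}$-linear $C^\infty$ bundle automorphisms of $E\otimes_\cc A$ reducing to the identity modulo $m$, and by the transformation rule for $\bar\partial$-operators they intertwine $\bar\partial+\omega$ with $\bar\partial+\overline{\exp}(\lambda)(\omega)$. Conversely, any isomorphism of deformations reducing to the identity modulo $m$ has the form $\exp(\lambda)$ for a unique $\lambda\in \sEndp^0(E)\otimes m$, because the automorphisms of $E$ are only the scalars by stability, and those are killed in $m$-adic completion. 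Composing the three bijections proves functorially
$$\homo(R,A)\;\cong\;\{\text{$A$-deformations of $E$}\}/\!\cong\;\cong\;\iso\sC(\sEndp(E);A).$$

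The main obstacle is the bundle-theoretic step, namely the precise correspondence between gauge-equivalence classes of Maurer--Cartan elements in $\sEndp(E)\otimes A$ and isomorphism classes of deformations of the holomorphic bundle $E$ over $\spec(A)$. Everything else is formal (completeness of the Kuranishi family and completion of local rings), but this bundle-theoretic identification is exactly the content of the example following Theorem \ref{deligne} in the paper, drawn from \cite[9.4]{gm}. Once we invoke it, the theorem follows immediately.
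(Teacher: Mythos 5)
Your proposal is correct and follows exactly the route the paper intends: the paper does not supply a written proof of Theorem \ref{gm3}, but instead cites \cite[9.4]{gm} and relies on the parallelism with the proof sketch of Theorem \ref{gm1}, and that is precisely the argument you have spelled out (completeness of the Kuranishi family plus the Example after Theorem \ref{deligne} giving the bijection between gauge-equivalence classes of Maurer--Cartan elements in $\sEndp(E)\otimes m$ and isomorphism classes of $A$-deformations of $E$). You also correctly identify that stability of $E$ replaces the framing/basepoint device of Theorem \ref{gm1}: since $\mathrm{Aut}(E)=\cc^*$ acts trivially, any isomorphism of deformations can be normalized to reduce to the identity modulo $m$, so the unframed Kuranishi family is universal and the full DGLA $\sEndp(E)$ (rather than an augmentation kernel) is the right controlling object. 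One small wording issue: the exponentials $\exp(\lambda)$ for $\lambda\in\sEnd^{0,0}(E)\otimes m$ are $A$-linear $C^\infty$ automorphisms of the underlying smooth bundle, not $\sO_{X_A}$-linear (i.e., not holomorphic) maps; this does not affect the argument but the phrasing should be corrected.
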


\begin{theorem}\label{gm4}
Let $C$ be the kernel of the map $H^1(X, \enmo(E))\to H^2(X, \enmo(E))$ given by $\xi\mapsto \xi\wedge\xi$. Denote the complete ring $(\sO_C)_{(0)}$ by $R'$. Then $R'$ represents the functor $A\mapsto \iso \sC(\sEndp(E); A)$. 
\end{theorem}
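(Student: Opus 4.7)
The plan is to mirror the proof of Theorem \ref{gm2} almost line for line, with the representation variety input $(\sEnd_0(L_\rho),\nabla)$ replaced by the Dolbeault DGLA $(\sEndp(E),\bar\partial)$. I would establish formality of this DGLA via a harmonic metric, invoke Theorem \ref{deligne} to pass to the cohomology DGLA, and then identify $\iso \sC$ of the cohomology DGLA directly with $\homo(R',A)$.

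\textbf{Step 1 (formality).} Because $E$ is stable with vanishing Chern classes, the Kobayashi--Hitchin / Uhlenbeck--Yau correspondence endows $E$ with a Hermite--Einstein metric which, since $c_i(E)=0$, is actually the metric of a flat unitary connection. Hence $\enmo(E)\cong E\otimes E^\vee$ carries a flat unitary structure and the $\partial\bar\partial$-lemma holds on $\sEnd^{\bullet,\bullet}(E)$. Following the pattern of Proposition \ref{formal}, set $\sK^{0,\bullet}=\ker(\partial)\cap \sEnd^{0,\bullet}(E)$; this is a sub-DGLA of $(\sEndp(E),\bar\partial)$ since $\partial$ is a derivation of the bracket. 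The inclusion and the projection to $\bar\partial$-cohomology
$$(\sEndp(E),\bar\partial)\;\hookleftarrow\;(\sK^{0,\bullet},\bar\partial)\;\twoheadrightarrow\;(H^\bullet_{\bar\partial}(\sEndp(E)),0)$$
are DGLA homomorphisms, and the $\partial\bar\partial$-lemma together with Hodge theory forces both of them to be quasi-isomorphisms. This zigzag witnesses formality.

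\textbf{Step 2 (reduction via Deligne).} Applying Theorem \ref{deligne} to each arrow of the above zigzag yields, for every Artinian local ring $A$, an equivalence of groupoids $\sC(\sEndp(E);A)\simeq \sC(H^\bullet(\sEndp(E));A)$, and hence a functorial bijection on isomorphism classes.

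\textbf{Step 3 (representability of the cohomology groupoid).} Because the cohomology DGLA carries zero differential, Definition \ref{cat} collapses to
$$\obj \sC(H^\bullet(\sEndp(E));A)=\{\omega\in H^1(X,\enmo(E))\otimes m\,|\,[\omega,\omega]=0\},$$
which, by the very definition of $C$ as the zero locus of $\xi\mapsto \xi\wedge\xi$, is functorially identified with $\homo(R',A)$. The morphisms are generated by $\omega\mapsto \exp(\ad\lambda)\omega$ for $\lambda\in H^0(X,\enmo(E))\otimes m$. But stability of $E$ together with Schur's lemma gives $H^0(X,\enmo(E))=\cc\cdot\id$, which is central in $\enmo(E)$, so $\ad\lambda=0$ and every morphism is the identity. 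Thus $\iso \sC(H^\bullet(\sEndp(E));A)=\obj \sC(H^\bullet(\sEndp(E));A)$, and this functor is represented by $R'$.

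\textbf{Main obstacle.} The heart of the argument is Step 1: while morally parallel to Proposition \ref{formal}, the Dolbeault DGLA treats $\partial$ and $\bar\partial$ asymmetrically, so one must check carefully that $\sK^{0,\bullet}$ is closed under the graded Lie bracket and that the $\partial\bar\partial$-lemma really upgrades both the inclusion and the quotient to DGLA quasi-isomorphisms, not merely quasi-isomorphisms of complexes. A secondary subtlety is the triviality of the gauge action in Step 3: it rests crucially on stability of $E$ via Schur's lemma, as without stability one would only obtain a nontrivial smooth action of $H^0$ on $C_{(0)}$ rather than a trivial one, and representability by $R'$ would fail.
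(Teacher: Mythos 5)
Your proof is correct and follows the same route as the paper's: Uhlenbeck--Yau/Donaldson gives the harmonic Hermitian--Einstein metric, which yields formality of $(\sEndp(E),\bar\partial)$, Theorem~\ref{deligne} transfers the deformation groupoid to the cohomology DGLA, and pro-representability of the latter by $R'$ closes the argument. The paper compresses the formality step into a citation of Simpson and calls the representability step a ``simple fact''; you have supplied both details correctly, in particular the crucial use of stability via Schur's lemma to show $H^0(X,\enmo(E))=\cc\cdot\id$, so that the gauge action on the cohomology DGLA is trivial and $\iso\sC$ coincides with $\obj\sC$ (one could further note that the Kähler identities for the flat unitary connection force $\ker\partial\cap\sEnd^{0,q}(E)$ to equal the space of $\bar\partial$-harmonic forms, so your middle complex in fact already carries zero differential, which makes the zigzag even more transparent).
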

Since $E$ is stable and of vanishing chern classes, $\enmo(E)$ is polystable and of vanishing chern classes too. It is proved by Uhlenbeck-Yau and Donaldson that there exists a Hermitian-Einstein metric on $\enmo(E)$, which has to be harmonic under the assumption of vanishing chern classes (see \cite{s1}). The existence of harmonic metric implies $\sEndp(E)$ is formal. Therefore, Theorem \ref{gm4} follows immediately from Theorem \ref{deligne} and the simple fact that $A\mapsto \iso \sC(H^\bullet(\sEndp(E)); A)$ is represented by $R'$. 

\begin{cor}
The formal scheme $\sM(X, n)_{(E)}$ is isomorphic to the formal scheme $C_{(0)}$. 
\end{cor}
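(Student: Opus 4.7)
The plan is to invoke Yoneda's lemma on the two preceding representability results. Theorem \ref{gm3} asserts that the functor
$$ A \longmapsto \iso \sC(\sEndp(E); A) $$
on Artinian local $\cc$-algebras is pro-represented by the complete local ring $R = (\sO_U)_{(E)}$. Theorem \ref{gm4} asserts that the very same functor is pro-represented by $R' = (\sO_C)_{(0)}$. Since a pro-representing object for a given functor is unique up to canonical isomorphism, this immediately yields a ring isomorphism $R \cong R'$.

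The second step is simply to transport this identification back to the language of formal schemes. The Kuranishi family $\sE$ was chosen to be complete, so the open neighborhood $U$ is an analytic slice of $\sM(X, n)$ at $E$; in particular the completion $(\sO_U)_{(E)}$ coincides with the completion of the local ring of $\sM(X, n)$ at $E$. Passing from complete local rings to their formal spectra then gives the asserted isomorphism
$$ \sM(X, n)_{(E)} \;\cong\; C_{(0)}. $$

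There is no genuine obstacle here: the corollary is purely formal once the two representability theorems are in hand, and all the real work has been absorbed into their proofs, namely the Goldman--Millson deformation-theoretic description of $\sM(X, n)_{(E)}$, the formality of $(\sEndp(E), \bar\partial)$ provided by the Uhlenbeck--Yau--Donaldson harmonic metric, and the equivalence of groupoids coming from Theorem \ref{deligne}. The one caveat worth noting, in parallel with the footnote to Theorem \ref{thm1} and with Corollary \ref{gmcor}, is that the resulting isomorphism is not canonical: it depends on the choice of a homotopy equivalence $\sEndp(E) \simeq H^\bullet(\sEndp(E))$ used in the formality statement.
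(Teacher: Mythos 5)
Your argument is exactly the paper's intended one: the corollary is stated immediately after Theorem \ref{gm4} precisely because $(\sO_U)_{(E)}$ and $(\sO_C)_{(0)}$ represent the same functor on Artinian local $\cc$-algebras, and uniqueness of the pro-representing object gives the isomorphism of complete local rings, hence of formal schemes. Your caveat about the non-canonicity of the isomorphism is also consistent with the footnote attached to Theorem \ref{thm1} and with the analogous Corollary \ref{gmcor}.
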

Given any $\xi\in C$, the Aomoto complex with respect to $\xi$ is
$$
(H^\bullet(X, E\otimes \Omega^p_X), \wedge\xi)\stackrel{\textrm{def}}{=}H^0(X, E\otimes \Omega^p_X)\xrightarrow{\wedge\xi}H^1(X, E\otimes \Omega^p_X)\xrightarrow{\wedge\xi}H^2(X, E\otimes \Omega^p_X)\xrightarrow{\wedge\xi}\cdots
$$
Similarly to Section 3, there is a tautological section $\chi$ of $\sO_C\otimes_\cc H^1(X, \enmo(E))$ such that the complex
$$
(\sO_C\otimes H^\bullet(X, E\otimes \Omega^p_X), \wedge\chi)\stackrel{\textrm{def}}{=}\sO_C\otimes H^0(X, E\otimes \Omega^p_X)\xrightarrow{\wedge\chi}\sO_C\otimes H^1(X, E\otimes \Omega^p_X)\xrightarrow{\wedge\chi}\cdots
$$
is the universal Aomoto complex on $C$. We define the resonance variety $\sS^{pq}_r(X, E)$ to be the analytic subspace associated to the determinantal ideal $I_{l-r+1}((\wedge\chi)_{q-1}\oplus (\wedge\chi)_q)$, where $l=\dim H^q(X, E\otimes \Omega^p_X)$ and $(\wedge\chi)_{q-1}: H^{q-1}(X, E\otimes \Omega^p_X)\to H^q(X, E\otimes \Omega^p_X)$, $(\wedge\chi)_{q}: H^{q}(X, E\otimes \Omega^p_X)\to H^{i+1}(X, E\otimes \Omega^p_X)$ are the differentials in the above complex. Then set-theoretically, we have
$$\sS^{pq}_r(X, E)=\{\xi\in C\;|\; \dim_\cc H^q(H^\bullet(X, E\otimes \Omega^p_X), \wedge\xi)\geq r\}. $$

The parallel theorems of Theorem \ref{thm1} and Theorem \ref{thm2} are the following. 
\begin{theorem}\label{thm3}
The isomorphism between the formal schemes
$$
\sM(X, n)_{(E)}\cong C_{(0)}
$$
induces an isomorphism between the reduced formal schemes
$$
\sW^{pq}_r(X, n)_{(E), \red}\cong \sS^{pq}_r(X, E)_{(0), \red}
$$
Furthermore, when $\dim H^q(X, E\otimes \Omega^p_X)=r$, there is an isomorphism before taking the reduced induced structures, that is 
$$
\sW^{pq}_r(X, n)_{(E)}\cong \sS^{pq}_r(X, E)_{(0)}.
$$
\end{theorem}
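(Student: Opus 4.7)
The plan is to follow the arguments of Theorems \ref{thm1} and \ref{thm2}, with the DGLA pair $(\sEnd_0(L_\rho), \odr(L_\rho))$ replaced by the Dolbeault DGLA pair $(\sEndp(E), \odol(E))$ on the Kuranishi neighbourhood $U$. Here $\odol(E)$ denotes the Dolbeault complex of $E\otimes\Omega^p_X$ with differential $\bar\partial$, and it is a $(\sEndp(E), \bar\partial)$-module via wedging forms and composing with endomorphisms on the $E$-factor. The identifications of $(\sO_U)_{(E)}$ and $(\sO_C)_{(0)}$ with the groupoid functors of $\sEndp(E)$ and of $H^\bullet(\sEndp(E))$ are already supplied by Theorems \ref{gm3} and \ref{gm4}, so the remaining task is to match the cohomology jump loci on the two sides through the cohomology functor $\hh$ of Definition \ref{cohfun}.

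The first step is to upgrade Simpson's formality of $\sEndp(E)$ to formality of the DGLA \emph{pair} $(\sEndp(E), \odol(E))$, in exact analogy with Proposition \ref{formal}. Since $E$ is stable with vanishing Chern classes, the Uhlenbeck--Yau--Donaldson theorem provides a Hermitian--Einstein metric on $E$, hence also harmonic metrics on $\enmo(E)$ and on $E\otimes\Omega^p_X$, so that the $\partial\bar\partial$-lemma holds for forms valued in either. Writing $\sK\sEndp(E):=\ker(\partial)\subset \sEndp(E)$ and $\sK\odol(E):=\ker(\partial)\subset \odol(E)$, the pair $(\sK\sEndp(E), \sK\odol(E))$ with differential $\bar\partial$ is a well-defined DGLA pair because $\partial$ is a derivation that vanishes on both factors of any wedge. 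The natural inclusion into $(\sEndp(E), \odol(E))$ and the natural surjection onto $(H^\bullet(\sEndp(E)), H^\bullet(\odol(E)))$ are both homotopy equivalences, since every cohomology class is represented by a harmonic form.

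The second step is to compute both sides of the desired isomorphism in terms of $\hh$. For each Artinian local ring $A$ and each $\omega\in\obj\sC(\sEndp(E); A)$, Theorem \ref{gm3} yields a locally free extension $E_A$ of $E$ over $X_A$ for which $(\odol(E)\otimes A, \bar\partial+\omega)$ is the relative Dolbeault resolution of $E_A\otimes\Omega^p_{X_A/A}$; combined with (\ref{compute}) this gives
$$
\hh^q_{\odol(E)}(\omega)\cong H^q(X_A, E_A\otimes \Omega^p_{X_A/A})\cong H^q(F^\bullet\otimes A).
$$
Dually, for $\sigma\in\obj\sC(H^\bullet(\sEndp(E)); A)$ corresponding via Theorem \ref{gm4} to a map $\spec(A)\to C_{(0)}$, the complex $(H^\bullet(\odol(E))\otimes A, \sigma)$ is exactly the pullback of the universal Aomoto complex on $C$ used to define $\sS^{pq}_r(X, E)$. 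Proposition \ref{homequ} applied to the formality equivalence of step one then yields a functorial isomorphism $\hh^q_{\odol(E)}(\omega)\cong \hh^q_{H^\bullet(\odol(E))}(\sigma)$ whenever $\omega$ and $\sigma$ correspond under $\sM(X, n)_{(E)}\cong C_{(0)}$.

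The reduced isomorphism $\sW^{pq}_r(X, n)_{(E), \red}\cong \sS^{pq}_r(X, E)_{(0), \red}$ then follows by the Mittag-Leffler inverse-limit argument of Theorem \ref{thm1}, applied to complete noetherian local integral domains $S$ and their Artinian truncations $S_k=S/m_S^{k+1}$. For the sharpened statement when $\dim H^q(X, E\otimes\Omega^p_X)=r$, the analogue of \cite[Proposition 2.1]{w} converts containment of the schematic image of $\spec(A)$ into $A$-freeness of rank $r$ of the relevant cohomology module on each side, a property transferred across the isomorphism of step two. The main technical obstacle is step one: the opening remark of this section stresses that wedge products do not preserve harmonic representatives, so formality must be routed through $\ker(\partial)$ rather than through harmonic forms, and one must verify compatibility with \emph{both} the Lie bracket on $\sEndp(E)$ \emph{and} the module action on $\odol(E)$ simultaneously.
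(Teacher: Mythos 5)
Your proposal is correct and follows essentially the same line as the paper's sketch: formality of the Dolbeault DGLA pair $(\sEndp(E), \odol(E))$, followed by the identification of the $\hh$ functor on each side with the fitting-ideal complexes, then the Mittag--Leffler inverse-limit argument and (for the sharpened statement) the Artinian $A$-freeness criterion of \cite[Proposition 2.1]{w}. The one place you go beyond the paper's text is worth noting: the paper only writes ``Just as the proof of isomorphisms (\ref{iso4}), (\ref{iso5}) and (\ref{diagram3}),'' implicitly invoking the analogue of Proposition \ref{formal} without stating it, whereas you spell out that analogue and correctly observe that it must be routed through $\ker(\partial)$ rather than harmonic representatives, in accordance with the caveat opening Section 4 about wedges of harmonic forms. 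For completeness it is worth also recording why $\bar\partial$ preserves $\ker(\partial)$ here: since $E$ is stable with \emph{vanishing} Chern classes, the Hermitian--Einstein connection is actually flat (Kobayashi, L\"ubke), so $\partial\bar\partial + \bar\partial\partial = 0$ on both $\sEndp(E)$ and $\odol(E)$; without the vanishing-Chern-class hypothesis the curvature term would obstruct the construction of $\sK\sEndp(E)$ and $\sK\odol(E)$ as subcomplexes.
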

The proof is also parallel to the proof of Theorem \ref{thm1} and Theorem \ref{thm2}, and even easier. So we just sketch the proof. 
\begin{proof}[Sketch of proof]
Let $S$ be a noetherian complete local ring over $\cc$, which is also an integral domain. Consider maps $g_1: \spec(S)\to \sM(X, n)$ and $g_2: \spec(S)\to C$ mapping the closed point  of $\spec(S)$ to $E$ and $0$ respectively. Passing to formal schemes, they become $\hat{g}_1: \widehat{\spec(S)}\to \sM(X, n)_{(E)}$ and $\hat{g}_2: \widehat{\spec(S)}\to C_{(0)}$ respectively. Suppose the composition of $\hat{g}_1$ and the isomorphism $\sM(X, n)_{(E)}\cong C_{(0)}$ is $\hat{g}_2$. Let $X_S=X\times_\cc \spec(S)$ and let $E_S$ be the pull back of the local Kuranishi family $\sE$ by $\id_X\times g_1: X\times \spec(S)\to X\times \sM(X, n). $
Similarly denote by $(H^\bullet(X, E\otimes \Omega^p_X)\otimes S, \wedge \chi_S)$ the pull back of the universal Aomoto complex $(H^\bullet(X, E\otimes \Omega^p_X)\otimes \sO_C, \wedge\chi)$ by $\id_X\times g_2: X\times \spec(S)\to X\times C$.

To prove the first part of the theorem, we assume $S$ is an integral domain. Then it is enough to show that the global sections of $H^q(X_S, E_S\otimes_{\sO_X}\Omega^p_X)$ is isomorphic to $H^q(H^\bullet(X, E\otimes \Omega^p_X)\otimes S, \wedge\chi_S)$. 
Let $S_k=S/(m_S)^k$, $E_{S_k}=E_S\otimes_S S_k$ and $\chi_{S_k}=\chi_S\otimes_S S_k$. Just as the proof of isomorphisms (\ref{iso4}), (\ref{iso5}) and (\ref{diagram3}), we can prove the following isomorphisms (\ref{iso21}), (\ref{iso22}) and (\ref{iso23}). 
\begin{equation}\label{iso21}
H^q(X_{S}, E_{S_k}\otimes_{\sO_X} \Omega^p_X)\cong \hh^q_{\odol(E)}(\omega_k)
\end{equation}
where $\odol(E)$ is the Dolbeault resolution of $E\otimes \Omega^p_X$, and $\omega_k\in \obj\sC(\sEndp(E); S_k)$ whose isomorphism class corresponds to the composition of $g_1^*: R\to S$ and the projection $S\to S_k$ in the sense of Theorem \ref{gm3}. 
\begin{equation}\label{iso22}
H^q(S_k\otimes H^\bullet(X, E\otimes \Omega^p_X), \wedge\chi_{S_k})\cong \hh^q_{H^\bullet(\odol(E))}(\sigma_k)
\end{equation}
where $\sigma_k\in \obj\sC(H^\bullet(\sEndp(E)); S_k)$ whose isomorphism class corresponds to the composition of $g_2^*: R'\to S$ and the projection $S\to S_k$. 
\begin{equation}\label{iso23}
\hh^q_{\odol(E)}(\omega_k)\cong \hh^q_{H^\bullet(\odol(E))}(\sigma_k)
\end{equation}
Therefore, 
$$
H^q(X_{S}, E_{S_k}\otimes_{\sO_X} \Omega^p_X)\cong H^q(S_k\otimes H^\bullet(X, E\otimes \Omega^p_X), \wedge\chi_{S_k})
$$
and this isomorphism is functorial on $S_k$. Since all the Mittag-Leffler condition is obviously satisfied, by taking inverse limit, we have
\begin{equation}\label{iso24}
H^q(X_{S}, E_{S}\otimes_{\sO_X} \Omega^p_X)\cong H^q(S\otimes H^\bullet(X, E\otimes \Omega^p_X), \wedge\chi_{S}).
\end{equation}
Thus, we proved the first part of the theorem.

Comparing to the proof of Theorem \ref{thm1}, isomorphism (\ref{iso24}) being true for any Artinian local ring $S$ implies the second part of the theorem. 
\end{proof}

\section{Applications and conjectures}
In fact, my motivation to study the cohomology jump loci is to study the fundamental groups of compact K\"ahler manifolds and their representations. Theorem \ref{thm1} and Theorem \ref{thm2} give new restrictions for a group being isomorphic to the fundamental group of some compact K\"ahler manifolds. 

Let $\Gamma$ be a finitely presented group. The set of representations $\homo(\Gamma, GL(n, \cc))$ has naturally a scheme structure, which we denote by $\br(\Gamma, n)$. Each representation $\rho\in \br(\Gamma, n)$ gives $\cc^n$ a $\Gamma$-module structure, which we denote by $(\cc^n)_\rho$. The cohomology jump loci $\sV^i_r(\Gamma, n)\stackrel{\textrm{def}}{=}\{\rho\in\br(\Gamma, n)\;|\; \dim H^i(\Gamma, (\cc^n)_\rho)\geq r\}$ are Zariski closed subsets of $\br(\Gamma, n)$. In fact, they can be defined more precisely as subschemes, because they can be identified with the cohomology jump loci for the classifying space of $\Gamma$. 

If $\Gamma\cong \pi_1(X)$ for some compact K\"ahler manifold $X$, then $\br(\Gamma, n)\cong \br(X, n)$. And furthermore, the isomorphism induces isomorphism of subschemes $\sV^1_r(\Gamma, n)\cong \sV^1_r(X, n)$. Therefore, Corollary \ref{consequence} gives rise to conditions on $\sV^1_r(\Gamma, n)$.
\begin{cor}
Let $\rho\in \br(\Gamma, n)$ be a semi-simple representation, and let $r=\dim H^1(\Gamma, (\cc^n)_\rho)$. Then $\sV^1_r(\Gamma, n)$ has quadratic singularity at $\rho$. 
\end{cor}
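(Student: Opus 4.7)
The plan is to deduce this corollary directly from Corollary \ref{consequence} by transporting along the identification $\br(\Gamma, n) \cong \br(X, n)$ from the preceding paragraph (where $X$ is a compact K\"ahler manifold with $\pi_1(X) \cong \Gamma$, as assumed by the surrounding discussion). The scheme isomorphism $\br(\Gamma, n) \cong \br(X, n)$ exists because both schemes represent the functor $A \mapsto \homo(\Gamma, GL(n, A))$ on $\cc$-algebras; let $\rho_X$ denote the image of $\rho$. Since semi-simplicity is an intrinsic property of the underlying homomorphism $\Gamma \to GL(n, \cc)$, the representation $\rho_X$ is semi-simple as well.

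The next step is to identify $\sV^1_r(\Gamma, n)$ and $\sV^1_r(X, n)$ as subschemes. On closed points this reduces to the classical isomorphism $H^1(X, L_{\rho_X}) \cong H^1(\pi_1(X), (\cc^n)_\rho)$, obtained from the five-term exact sequence of the Cartan--Leray (Lyndon--Hochschild--Serre) spectral sequence for the universal cover $\tilde X \to X$. Indeed, since $\tilde X$ is simply connected, $H_1(\tilde X) = 0$ by Hurewicz, so $H^1(\tilde X, M) = H^1(\tilde X, \cc) \otimes M$ vanishes for any coefficient module $M$; the five-term sequence
$$0 \to H^1(\Gamma, M) \to H^1(X, L_M) \to H^0(\Gamma, H^1(\tilde X, M)) \to H^2(\Gamma, M)$$
then collapses to the desired isomorphism. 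Running the same argument with coefficients in the universal local system on $X \times \spec(A)$ for an arbitrary $\sO_{\br(X, n)}$-algebra $A$ shows that the fitting ideal defining $\sV^1_r(X, n)$ coincides with the analogous fitting ideal for $\sV^1_r(\Gamma, n)$, so the two subschemes agree under the scheme isomorphism above.

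With these identifications in place, the corollary follows from Corollary \ref{consequence} applied to $X$, $\rho_X$, and $i = 1$: both hypotheses ($\rho_X$ semi-simple and $r = \dim H^1(X, L_{\rho_X})$) are satisfied, so $\sV^1_r(X, n)$ has quadratic singularities at $\rho_X$, and transporting back gives the same for $\sV^1_r(\Gamma, n)$ at $\rho$. The only real obstacle is verifying the scheme-theoretic compatibility of the jump-locus structures in degree one, which is mild bookkeeping with the five-term sequence; no new analytic input is needed, since all the substantive work --- formality of $(\sEnd(L_\rho), \odr(L_\rho))$ and the Goldman--Millson quadratic-cone description --- has already been carried out on the K\"ahler-manifold side.
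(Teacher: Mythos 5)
Your proof follows the same route the paper takes: transfer to a compact K\"ahler manifold $X$ with $\pi_1(X)\cong\Gamma$ via the scheme isomorphism $\br(\Gamma,n)\cong\br(X,n)$, identify $\sV^1_r(\Gamma,n)$ with $\sV^1_r(X,n)$ as subschemes, and invoke Corollary \ref{consequence}. The only difference is that the paper simply asserts the subscheme identification, whereas you spell out the five-term exact sequence argument showing $H^1(\Gamma,M)\cong H^1(X,L_M)$ and its scheme-theoretic upgrade via fitting ideals; this fills in a detail the paper leaves implicit but does not change the approach.
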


It would be interesting to have some example of finitely presented group which satisfies the condition of Goldman-Millson, but fails this condition. We also want to mention some potential connection between our results here and the absolute constructible sets defined by Simpson. 

Now, we assume $X$ to be a complex smooth projective variety. Denote the moduli space of $Gl(n, \cc)$ representations of $\pi_1(X)$, the moduli space of flat bundles on $X$ of rank $n$ and the moduli space of semistable Higgs bundles on $X$ of rank $n$ with trivial chern classes by $\mb(X, n)$, $\mdr(X, n)$ and $\mdol(X, n)$ respectively. In \cite{s3}, it is proved that there is a natural analytic isomorphism $\phi: \mb(X, n)\to\mdr(X, n)$, and a natural homeomorphism $\psi: \mb(X, n)\to\mdol(X, n)$. In \cite{s3}, inspired by Deligne's definition of absolute Hodge cycles, Simpson defined absolute constructible sets in $\mb(X, n)$. The cohomology jump loci in each moduli space are identified via $\phi$ and $\psi$, and they are the main examples of closed absolute constructible sets. 

In the case of $n=1$, Simpson has proved that the closed absolute constructible sets are unions of torsion translates of subtori. Theorem \ref{thm1} and Theorem \ref{thm2} may give a hint of some possible local conditions of absolute constructible sets, which would partially generalize Simpson's result. In fact, when $n=1$, Theorem \ref{thm2} together with an explicit description of the isomorphism in Corollary \ref{gmcor} by the exponential map shows that the cohomology jump loci has to be union of translates of subtori (see \cite{w}). Instead of Theorem \ref{thm2}, Theorem \ref{thm1} would also imply the same conclusion. In fact, if an irreducible subvariety $V$ of a complex torus $T$ satisfies that near every point it is the image of a cone under the exponential map, then $V$ must be a translate of a subtorus\footnotemark\ . 

\footnotetext{This can be proved easily by looking at a smooth point of $V$. In fact, Arapura has proved a much stronger statement that if this is true for one point of $V$, then $V$ has to be a translate of a subtorus. See \cite{a1} and \cite{a2}. }

So we propose the analogy of being unions of translates of subtori as follows. Similar to Theorem \ref{thm3}, we can prove the following. For an irreducible representation $\rho\in \mb(X, n)$, there is a quadratic cone $C$ in $H^1(X, \enmo(L_\rho))$ such that there is a canonical isomorphism $\mb(X, n)_{(\rho)}\cong C_{(0)}$. Our Theorem \ref{thm2} and  \cite[Proposition 10.5]{s3} imply that under this isomorphism, each cohomology jump locus corresponds to a cone in $C_{(0)}$. In general, suppose $S\subset \mb(X, n)$ is a closed absolute constructible subset and it contains a point corresponding to an irreducible representation $\rho$. Then under the above isomorphism, we conjecture that $S_{(\rho)}$ must be a cone in $C_{(0)}$.

\end{document}